\documentclass[11pt]{article}

\usepackage[margin=1in]{geometry}
\usepackage{amsmath,amssymb}
\usepackage{booktabs}
\usepackage{longtable}
\usepackage{graphicx}
\usepackage{float}
\usepackage{hyperref}
\usepackage{enumitem}
\usepackage{algorithm}
\usepackage{algpseudocode}
\usepackage{booktabs}
\usepackage{tabularx}
\usepackage{multirow}
\usepackage{array}
\usepackage{tikz}
\usepackage{amsthm}
\newtheorem{definition}{Definition}
\newtheorem{theorem}{Theorem}

\newtheorem{proposition}{Proposition}[section]

\usetikzlibrary{arrows.meta,positioning}
\usepackage{hyperref}

\newcommand{\blue}[1]{\begin{color}{blue}#1\end{color}}

\title{GPU-Accelerated Preprocessing for Large-Scale Linear Programming}
\date{October 13, 2026}

\author{
Kaihuang Chen\textsuperscript{1}
\and
Jiawei Lu\textsuperscript{1}
\and
Defeng Sun\textsuperscript{1}
\and
Yancheng Yuan\textsuperscript{1}
\and
Guojun Zhang\textsuperscript{1}
\and
Xiangyu Zhang\textsuperscript{1}
\and
Xinyuan Zhao\textsuperscript{2}
}

\date{
\small
\textsuperscript{1}Department of Applied Mathematics,
The Hong Kong Polytechnic University, Hung Hom, Hong Kong.\\
\texttt{\{kaihuang.chen, jiawei.lu, guojun.zhang,
xiangyu.zhang\}@connect.polyu.hk}\\
\texttt{\{defeng.sun, yancheng.yuan\}@polyu.edu.hk}
\\[0.5em]
\textsuperscript{2}Department of Mathematics,
Beijing University of Technology, Beijing, P.R. China.\\
\texttt{xyzhao@bjut.edu.cn}
}

\begin{document}

\maketitle

\begin{abstract}
GPU-accelerated first-order methods are increasingly effective for solving
large-scale linear programs (LPs), yet LP preprocessing remains predominantly
CPU-based and can become costly.
A key challenge is that dependencies among presolve reductions
and frequent modifications to sparse LP data make efficient
GPU parallelization difficult.
To address this challenge, we develop GPU-Preprocessor, a GPU-native
LP preprocessing framework that integrates folding and presolve.
To efficiently implement folding on GPUs, we adapt a randomized partition refinement algorithm to LP folding, using sparse matrix–vector products and parallel sorting to construct stable row and column partitions.
For presolve, we introduce a new piecewise-linear reformulation rule that exploits underlying convex piecewise-linear structure to obtain sparser LP representations. We also develop a framework that combines phase scheduling with a Scan--Plan--Apply design to coordinate presolve rules and reduce sparse-matrix reconstruction overhead through batched updates.
Numerical experiments on three LP benchmark sets show that
GPU-Preprocessor achieves greater average problem-size reductions
than PSLP and Gurobi Presolve, with SGM10 preprocessing-time
speedups of up to $43.9\times$ and $677.7\times$, respectively.
Here, SGM10 denotes the shifted geometric mean with a 10-second shift.
On the largest tested quadratic assignment problem LP relaxation,
these speedups reach $126.3\times$ and $2{,}207.9\times$.
When integrated with the GPU-accelerated LP solver HPR-LP,
GPU-Preprocessor achieves SGM10 speedups of
$2.5\times$--$5.0\times$ in combined preprocessing and solving time
over HPR-LP without preprocessing.
The code is available at
\url{https://github.com/PolyU-IOR/GPU-Preprocessor}.
\end{abstract}

\noindent\textbf{Keywords:}
Linear programming; GPU acceleration; Preprocessing;
Presolve; Folding.

\noindent\textbf{MSC 2020:}
90C05, 90C06, 65Y05, 68W10.

\blue{}

\section{Introduction}

This paper studies preprocessing for large-scale linear programs (LPs) on GPUs. We consider LPs of the form
\begin{equation}
\label{eq:original-lp}
\begin{aligned}
\min_{x\in\mathbb{R}^{n}}\quad
& c^\top x\\
\mathrm{s.t.}\quad
& Ax\in\mathcal{R},\\
& x\in\mathcal{C},
\end{aligned}
\end{equation}
where $A\in\mathbb{R}^{m\times n}$ is the constraint matrix,
$c\in\mathbb{R}^n$ is the objective vector,
\[
\mathcal{R}:=\{r\in\mathbb{R}^m:L\leq r\leq U\},
\qquad
\mathcal{C}:=\{x\in\mathbb{R}^n:\ell\leq x\leq u\},
\]
with
$L\in(\mathbb{R}\cup\{-\infty\})^m$,
$U\in(\mathbb{R}\cup\{+\infty\})^m$,
$\ell\in(\mathbb{R}\cup\{-\infty\})^n$, and
$u\in(\mathbb{R}\cup\{+\infty\})^n$.

Traditional LP solvers primarily rely on simplex or interior-point
methods~\cite{Gurobi2025,IBMCPLEX}. Before applying these algorithms,
presolve is typically used to simplify the LP by removing redundant
constraints, eliminating variables, and tightening
bounds~\cite{brearley1975analysis,andersen1995presolving}. These
reductions can reduce solution time and memory requirements and, in
some cases, detect infeasibility or unboundedness.

{Extensive research has developed a broad range of techniques
for LP preprocessing and solution recovery.}
Early work established systematic reduction procedures
and methods for exploiting structural
redundancy~\cite{brearley1975analysis,williams1983reduction,
bradley1983structural,tomlin1986duplicate},
while subsequent work developed comprehensive presolve
and solution-recovery frameworks and specialized
preprocessing techniques for simplex and interior-point
methods~\cite{tomlin1983formal,andersen1995presolving,
gondzio1997presolve,meszaros2003advanced}.
{Beyond conventional presolve reductions,
Grohe et al.~\cite{Grohe2014DimensionReduction}
developed a dimension-reduction method called folding,
which uses color refinement to construct stable partitions
for merging variables and constraints.}
More recently, Gleixner et al.~\cite{gleixner2023papilo}
developed PaPILO, a solver-independent presolving
library for LP and mixed-integer programming (MIP) that exploits multicore CPUs
through parallel reduction detection and conflict checking.

Meanwhile, GPU-accelerated first-order methods have attracted increasing attention for solving large-scale LPs~\cite{ApplegateEtAl2021,LuYang2025,LuEtAl2023,
ChenEtAl2026,LuPengYang2025}. Their iterations rely mainly on sparse
matrix--vector products, projections, and vector operations, which are
well suited to GPU parallelism. In particular, Applegate et
al.~\cite{ApplegateEtAl2021} developed PDLP, a practical LP solver based
on the primal--dual hybrid gradient method. GPU implementations of PDLP,
including cuPDLP.jl~\cite{LuYang2025} and
cuPDLP-C~\cite{LuEtAl2023}, have demonstrated competitive performance
relative to simplex and interior-point methods on large-scale LP
benchmarks. Chen et al.~\cite{ChenEtAl2026} developed HPR-LP, a
GPU-accelerated LP solver based on a Halpern Peaceman--Rachford method,
which further improves performance over cuPDLP. As GPU acceleration
reduces the time spent in the core solver, CPU-based presolve can account
for an increasing fraction of the overall solution time, particularly
for large-scale LPs. Consequently, reducing presolve cost becomes
increasingly important for realizing the full benefit of GPU-accelerated
LP solving.

Motivated by this setting, Cederberg and Boyd~\cite{CederbergBoyd2026}
developed PSLP, an open-source CPU presolver for GPU-accelerated
first-order LP solvers. PSLP uses a selected set of inexpensive rules
to balance reduction effectiveness and presolve time, achieving model
reductions comparable to those of Gurobi Presolve at lower presolve cost.
However, CPU-based presolve can remain costly relative to GPU solve
time, particularly for larger LPs. This motivates developing GPU-native preprocessing
techniques that reduce preprocessing overhead while
keeping LP data on the device throughout preprocessing
and solving.

Some parallelizable presolve operations have already been explored on
GPUs. Sofranac et al.~\cite{sofranac2022gpu} developed a GPU algorithm
for domain propagation that tightens variable bounds using linear
constraints. Nevertheless, publicly available general-purpose GPU-based
LP presolvers remain limited. A key challenge is that classical presolve
involves dependencies among reductions and frequent modifications to
sparse LP data. Reductions may enable, invalidate, or conflict with one
another, while fine-grained modifications to sparse data can incur
substantial data-management and synchronization overhead on GPUs. These
characteristics make efficient GPU parallelization challenging. To
address them, we develop GPU-Preprocessor, a GPU-native preprocessor for
large-scale LPs. \footnote{%
Our initial GPU presolve implementation adapted selected
implementation techniques from the CPU-based PSLP
solver~\cite{CederbergBoyd2026}
and was integrated into the Julia implementation of HPR-LP in
\href{https://github.com/PolyU-IOR/HPR-LP/commit/0ecf5f0b440dacb3bfe9a71c6709bbc43c567882}
{April 2026}. GPU folding was added to a development branch in
\href{https://github.com/PolyU-IOR/HPR-LP/commit/e010452eea08cd8d44c98104c846c53066293b54}
{June 2026},
and both components were subsequently integrated into HPR-LP-C in
\href{https://github.com/PolyU-IOR/HPR-LP-C/commit/ff3be826f89950a870902836ce21685da094b0d8}
{August 2026}.
Separately, Cederberg and Boyd~\cite{CederbergBoyd2026cuPSLP}
presented cuPSLP, a GPU-accelerated version of PSLP,
in a September 2026 preprint.
} Our main contributions are as follows.

\begin{enumerate}[leftmargin=*]

\item We develop GPU-Preprocessor, a GPU-native framework
that integrates folding and presolve for large-scale LPs.
To efficiently implement folding on GPUs, we adapt a
randomized partition refinement algorithm to LP folding,
using sparse matrix--vector products and parallel sorting
to construct stable row and column partitions.

\item  For presolve, we introduce a new piecewise-linear
reformulation rule that identifies the underlying convex
piecewise-linear structure of an LP, applies presolve
to the resulting problem, and selects the LP representation
with fewer nonzeros. To efficiently implement presolve on GPUs, we develop
a GPU-accelerated framework that combines phase scheduling with a
Scan--Plan--Apply design to coordinate multiple presolve
rules and reduce sparse-matrix reconstruction overhead
through batched updates.

\item Numerical experiments on three LP benchmark sets show that
GPU-Preprocessor achieves greater average reductions in problem size
than PSLP and Gurobi Presolve while requiring substantially less
preprocessing time.
Specifically, GPU-Preprocessor achieves SGM10 preprocessing-time
speedups of $6.0\times$--$43.9\times$ and
$23.2\times$--$677.7\times$ over PSLP and
Gurobi Presolve, respectively.
For the largest tested LP relaxation of a quadratic
assignment problem (QAP), these speedups reach
$126.3\times$ and $2{,}207.9\times$, respectively.
When combined with HPR-LP, GPU-Preprocessor further
achieves SGM10 total-time speedups of
$2.5\times$--$5.0\times$,
$1.5\times$--$3.0\times$, and
$1.9\times$--$29.0\times$
over HPR-LP without preprocessing, with PSLP,
and with Gurobi Presolve, respectively.

\end{enumerate}

The remainder of this paper is organized as follows.
Section~2 introduces the preliminaries of LP preprocessing
and reviews classical results.
Section~3 presents the GPU-Preprocessor framework,
which integrates GPU-accelerated folding and presolve.
Section~4 reports numerical results on several LP benchmark sets.
Section~5 concludes the paper.

\section{Preliminaries}
\label{sec:preliminaries}
This section recalls basic concepts and classical results on LP preprocessing
used in the subsequent development. We first define a valid preprocessing
operator and review stable partitions and folding, and then summarize
classical presolve reductions relevant to GPU-Preprocessor.

Let $P$ denote the original LP~\eqref{eq:original-lp}, with optimal value $v(P)$ and set of optimal solutions $\mathcal X^\star(P)$.

\begin{definition}[Valid preprocessing operator]
\label{def:valid-preprocessing-operator}
A preprocessing operator \(\mathcal T\) with \(\mathcal T(P)=P'\) is valid if
there exists a recovery map \(\mathcal Q\) such that, whenever \(P\) has an
optimal solution, \(P'\) also has an optimal solution and
\begin{equation*}
v(P')=v(P),
\qquad
\mathcal Q\bigl(\mathcal X^\star(P')\bigr)
\subseteq \mathcal X^\star(P).
\end{equation*}
\end{definition}

\subsection{Stable Partitions and Folding}
\label{sec:folding-preliminaries}
Folding reduces the dimension of an LP by aggregating rows and columns
according to stable partitions~\cite{Grohe2014DimensionReduction}.
We first introduce the necessary partition notation and the notion of
stability, then define the folded LP and recall its validity and solution
recovery.

For a nonnegative integer $k$, we write $[k]=\{1,\ldots,k\}$,
with $[0]=\varnothing$.
A partition $\mathcal P=\{S_1,\ldots,S_q\}$ of $[k]$
consists of pairwise disjoint nonempty subsets whose union is $[k]$.
The sets $S_1,\ldots,S_q$ are called the parts of $\mathcal P$, and
$|\mathcal P|$ denotes the number of parts. For each $s\in[k]$, let
$\pi_{\mathcal P}(s)$ denote the index of the part containing $s$, that is,
$\pi_{\mathcal P}(s)=i$ if $s\in S_i$. For two partitions $\mathcal P$
and $\mathcal P'$ of $[k]$, we say that $\mathcal P'$ refines $\mathcal P$
(or is finer than $\mathcal P$) if every part of $\mathcal P'$ is contained
in a part of $\mathcal P$; equivalently, $\mathcal P$ is coarser than
$\mathcal P'$.

For the original LP~\eqref{eq:original-lp}, let $\mathcal P_r=\{I_1,\ldots,I_{\bar m}\}, \mathcal P_c=\{J_1,\ldots,J_{\bar n}\}$
be partitions of the row and column index sets, respectively, where
$\bar m=|\mathcal P_r|$ and $\bar n=|\mathcal P_c|$.
Following~\cite{Grohe2014DimensionReduction}, folding requires these
partitions to be stable in the following sense.

\begin{definition}[Stable partition pair]
\label{def:folding-stable-revised-v2}
A partition pair $(\mathcal P_r,\mathcal P_c)$ is stable for
LP~\eqref{eq:original-lp} if, for every $I_p\in\mathcal P_r$ and
$J_q\in\mathcal P_c$, the following conditions hold:
\begin{enumerate}[label=\textup{(\roman*)},leftmargin=*]
\item the constraint bounds $(L_i,U_i)$ are identical within $I_p$,
and the objective coefficient and variable bounds $(c_j,\ell_j,u_j)$
are identical within $J_q$;
\item
\[
\sum_{j\in J_q}A_{ij}
=
\sum_{j\in J_q}A_{i'j}
\quad\text{for all } i,i'\in I_p,
\qquad
\sum_{i\in I_p}A_{ij}
=
\sum_{i\in I_p}A_{ij'}
\quad\text{for all } j,j'\in J_q.
\]
\end{enumerate}
\end{definition}

A stable partition pair for the original LP is coarsest if every
stable partition pair for this LP refines it in both the row and
column partitions. The coarsest stable partition pair exists and
is unique~\cite{Grohe2014DimensionReduction}; we denote it by
$(\mathcal P_r^\star,\mathcal P_c^\star)$.

Given a stable partition pair $(\mathcal P_r,\mathcal P_c)$,
folding~\cite{Grohe2014DimensionReduction} aggregates each row part into
one constraint and each column part into one variable, yielding
\begin{equation}
\label{eq:folded-lp}
\begin{aligned}
\min_{\bar x\in\mathbb{R}^{\bar n}}\quad
& \bar c^{\top}\bar x\\
\mathrm{s.t.}\quad
& \bar L\leq\bar A\bar x\leq\bar U,\\
& \bar\ell\leq\bar x\leq\bar u,
\end{aligned}
\end{equation}
where, for $p=1,\ldots,\bar m$ and $q=1,\ldots,\bar n$,
\[
\bar A_{pq}
=\frac{1}{|I_p|}\sum_{i\in I_p}\sum_{j\in J_q}A_{ij},
\qquad
\bar c_q=\sum_{j\in J_q}c_j.
\]
The folded constraint bounds $(\bar L_p,\bar U_p)$ and variable bounds
$(\bar\ell_q,\bar u_q)$ are the common bounds within $I_p$ and $J_q$,
respectively.

\begin{theorem}[Validity of folding~\cite{Grohe2014DimensionReduction}]
\label{thm:folding-validity}
For a stable partition pair $(\mathcal P_r,\mathcal P_c)$, folding is a
valid preprocessing operator in the sense of
Definition~\ref{def:valid-preprocessing-operator}. In particular, any
optimal solution $\bar x$ of~\eqref{eq:folded-lp} can be
lifted to an optimal solution $x$ of~\eqref{eq:original-lp} by
\begin{equation}
\label{eq:folding-lift}
x_j=\bar x_{\pi_{\mathcal P_c}(j)},
\qquad j=1,\ldots,n.
\end{equation}
\end{theorem}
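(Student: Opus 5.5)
The proof will compare the original LP~\eqref{eq:original-lp} and the folded LP~\eqref{eq:folded-lp} through two linear maps. The lift is $E:\mathbb R^{\bar n}\to\mathbb R^n$, $(E\bar x)_j=\bar x_{\pi_{\mathcal P_c}(j)}$, as in~\eqref{eq:folding-lift}. The averaging map is $\Pi:\mathbb R^n\to\mathbb R^{\bar n}$, $(\Pi x)_q=\frac{1}{|J_q|}\sum_{j\in J_q}x_j$. I will show that both maps preserve feasibility and objective value. From that, $v(P')=v(P)$ follows, and $\mathcal Q=E$ maps optimal folded solutions to optimal original solutions.

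\textbf{Step 1 (lift preserves feasibility and value).} For $\bar x$ feasible for~\eqref{eq:folded-lp}, set $x=E\bar x$. The variable bounds hold because $\ell_j,u_j$ are constant on $J_q$ by condition (i). For the objective, $c^\top x=\sum_q\bar x_q\sum_{j\in J_q}c_j=\bar c^\top\bar x$. For a row $i\in I_p$,
\[
(Ax)_i=\sum_q\bar x_q\sum_{j\in J_q}A_{ij}.
\]
By the row half of condition (ii), the inner sum is the same for all $i\in I_p$. It therefore equals its average over $I_p$, which is $\bar A_{pq}$. Hence $(Ax)_i=(\bar A\bar x)_p\in[\bar L_p,\bar U_p]=[L_i,U_i]$.

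\textbf{Step 2 (averaging preserves feasibility and value).} For $x$ feasible for~\eqref{eq:original-lp}, set $\bar x=\Pi x$. It satisfies the common bounds, since an average of values in an interval stays in it. Because $c$ is constant on $J_q$, we have $\bar c_q\bar x_q=\sum_{j\in J_q}c_jx_j$, so $\bar c^\top\bar x=c^\top x$.

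The main point is the constraints. I will show $(\bar A\bar x)_p=\frac{1}{|I_p|}\sum_{i\in I_p}(Ax)_i$; the right side is an average of values in $[L_i,U_i]$, a common interval on $I_p$, so the constraint follows. Write $\sigma_{pq}:=\sum_{i\in I_p}A_{ij}$ for $j\in J_q$, which is independent of $j$ by the column half of (ii). Also write $\rho_{pq}:=\sum_{j\in J_q}A_{ij}$ for $i\in I_p$. Counting entries of the block two ways gives $\bar A_{pq}=\rho_{pq}=\sigma_{pq}|J_q|/|I_p|$. Then
\[
\frac{1}{|I_p|}\sum_{i\in I_p}(Ax)_i
=\frac{1}{|I_p|}\sum_q\sum_{j\in J_q}\sigma_{pq}x_j
=\sum_q\frac{\sigma_{pq}|J_q|}{|I_p|}\,\bar x_q
=\sum_q\bar A_{pq}\bar x_q .
\]
This uses both halves of (ii) and is the only nontrivial computation.

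\textbf{Step 3 (conclusion).} Suppose $P$ has an optimal solution $x^\star$. By Step 2, $\Pi x^\star$ is feasible for the folded LP with value $v(P)$. By Step 1, every folded feasible point lifts to an original feasible point of equal value, so the folded objective is bounded below by $v(P)$. Hence $\Pi x^\star$ is optimal for~\eqref{eq:folded-lp} and $v(P')=v(P)$. If $\bar x$ is any folded optimum, then $E\bar x$ is feasible with value $v(P')=v(P)$ by Step 1, so it is optimal. This gives $E(\mathcal X^\star(P'))\subseteq\mathcal X^\star(P)$ and establishes validity with $\mathcal Q=E$.
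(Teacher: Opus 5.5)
Your proof is correct and complete. The paper gives no proof of this theorem and simply cites Grohe et al., whose argument is the same lift-and-average comparison: your Step~2 identity $(\bar A\,\Pi x)_p=\frac{1}{|I_p|}\sum_{i\in I_p}(Ax)_i$ is their fractional-automorphism relation written in folded coordinates. One minor correction: Step~2 needs only the column half of condition~(ii), because $\bar A_{pq}=\sigma_{pq}|J_q|/|I_p|$ follows from the definition of $\bar A$ alone, while the row half is what Step~1 uses.
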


\subsection{Classical Presolve}
\label{sec:presolve-preliminaries}

Presolve simplifies an LP by applying valid reductions to its constraints,
variables, and bounds before optimization. Table~\ref{tab:presolve-strategies}
summarizes the presolve rules implemented in GPU-Preprocessor, including
classical rules and the new rule introduced in this paper.

\begingroup
\footnotesize
\setlength{\tabcolsep}{6pt}
\renewcommand{\arraystretch}{1.10}

\begin{longtable}{
    @{}
    >{\raggedright\arraybackslash}p{0.27\linewidth}
    >{\raggedright\arraybackslash}p{0.68\linewidth}
    @{}
}
\caption{Presolve rules implemented in GPU-Preprocessor.}
\label{tab:presolve-strategies}\\

\toprule
Presolve rule & Description \\
\midrule
\endfirsthead

\multicolumn{2}{c}{
    \tablename~\thetable{} (continued)
}\\
\toprule
Presolve rule & Description \\
\midrule
\endhead

\midrule
\multicolumn{2}{r}{Continued on next page}\\
\endfoot

\bottomrule
\endlastfoot

\multicolumn{2}{@{}l}{\textit{Existing presolve rules}}\\*
\addlinespace

Infeasible and fixed variables~\cite{andersen1995presolving}
&
Detects infeasibility when a variable's lower bound exceeds its upper
bound. Fixes a variable when its bounds coincide and substitutes its
value into the constraints and objective.
\\

\addlinespace

Infeasible and redundant rows~\cite{brearley1975analysis,andersen1995presolving}
&
Uses variable bounds to detect infeasible rows and remove redundant rows.
\\

\addlinespace

Empty rows and columns~\cite{andersen1995presolving}
&
Removes redundant empty rows and fixes variables corresponding to empty
columns according to their objective coefficients and bounds.
\\

\addlinespace

Singleton rows~\cite{brearley1975analysis}
&
Uses a row with one nonzero entry to tighten the bounds of its variable
and then removes the row.
\\

\addlinespace

Bound strengthening~\cite{brearley1975analysis,AchterbergEtAl2019}
&
Tightens variable bounds using constraint information and the bounds of
the other variables.
\\

\addlinespace

Singleton columns~\cite{brearley1975analysis,andersen1995presolving}
&
Substitutes or fixes a variable appearing in only one row, or tightens
its bounds, depending on the row type, variable bounds, and objective
coefficient.
\\

\addlinespace

Dual fixing~\cite{AchterbergEtAl2019}
&
Uses objective and constraint information to fix a variable at one of its
bounds or detect unboundedness.
\\

\addlinespace

Doubleton equations~\cite{bradley1983structural}
&
Uses an equality row with two nonzero entries to eliminate one variable
by substitution.
\\

\addlinespace

Parallel rows~\cite{tomlin1986duplicate,AchterbergEtAl2019}
&
Detects rows whose coefficients differ only by a nonzero scalar factor,
combines their bounds after scaling, and removes redundant rows.
\\

\addlinespace

Parallel columns~\cite{andersen1995presolving,AchterbergEtAl2019}
&
Detects columns whose coefficients differ only by a nonzero scalar factor
and combines or fixes the corresponding variables according to their
objective coefficients and bounds.
\\

\addlinespace

Column domination~\cite{beasley1987algorithm}
&
Fixes cost-dominated variables to zero in set covering
LP relaxations.
\\

\addlinespace

Zero-cost product variables~\cite{mccormick1976computability,GamrathEtAl2016}
&
Eliminates a zero-cost auxiliary product variable and its
associated McCormick constraints when the variable appears
in no other constraints and has no additional nonredundant bounds.
\\

\midrule
\multicolumn{2}{@{}l}{
    \textit{Presolve rule introduced in this paper}
}\\*
\addlinespace

Piecewise-linear reformulation
&
Recognizes a convex piecewise-linear program from its LP
representation, presolves the recognized program, and selects
the LP representation with fewer matrix nonzeros.
\\

\end{longtable}
\endgroup

\section{GPU-Preprocessor}

\subsection{Overview of GPU-Preprocessor}
\label{sec:gpu-presolve-framework}

GPU-Preprocessor consists of two components: folding and presolve.
Folding reduces LP dimensions by merging rows and columns,
as described in Section~\ref{sec:gpu-accelerated-folding}.
Presolve organizes the rules in Table~\ref{tab:presolve-strategies}
into five phases: Startup, Cleanup, Propagation, Aggregation,
and Parallel Detection, as described in
Section~\ref{sec:gpu-accelerated-presolve}.
Algorithm~\ref{alg:gpu-preprocessor} summarizes the overall workflow.
Cleanup is repeated until no further changes occur,
and subsequent phases return to Cleanup when sufficient
progress is made.
We limit the total number of phase executions to $K$,
excluding Cleanup because of its low computational cost.
Here, nnz denotes the number of nonzero entries
in the constraint matrix of the current LP.
After the preprocessed LP is solved, postsolve applies the
recorded presolve recovery maps in reverse order.
Unfolding then applies the folding recovery map to recover
a solution of the original LP.

\begin{algorithm}[H]
\caption{GPU-Preprocessor workflow}
\label{alg:gpu-preprocessor}
\begin{algorithmic}[1]
\Require Original LP $P$ and phase limit $K$
\Ensure Recovered optimal solution $x$ of the original LP $P$

\State $P^0 \gets \Call{Folding}{P}$
\State $P^1 \gets \Call{Startup}{P^0}$; $r \gets 1$

\While{true}
    \Statex \textbf{Cleanup:}
    \Repeat
        \State $P^r \gets \Call{Cleanup}{P^r}$
    \Until{Cleanup leaves the LP unchanged}

    \State \textbf{break if} $r \ge K$

    \State $P^{r+1} \gets \Call{Propagation}{P^r}$; $r \gets r+1$

    \If{Propagation reduces nnz sufficiently or tightens variable bounds}
        \State \textbf{go to Cleanup}
    \EndIf

    \State \textbf{break if} $r \ge K$

    \State $P^{r+1} \gets \Call{Aggregation}{P^r}$; $r \gets r+1$
    \If{Aggregation reduces nnz sufficiently}
        \State \textbf{go to Cleanup}
    \EndIf

    \State \textbf{break if} $r \ge K$

    \State $P^{r+1} \gets \Call{Parallel Detection}{P^r}$; $r \gets r+1$
    \If{Parallel Detection reduces nnz sufficiently}
        \State \textbf{go to Cleanup}
    \EndIf
    \State \textbf{break}
\EndWhile

\State $x^r \gets \Call{Solve}{P^r}$
\State $x^0 \gets \Call{Postsolve}{x^r}$
\State $x \gets \Call{Unfold}{x^0}$
\State \Return $x$
\end{algorithmic}
\end{algorithm}

The following theorem establishes the validity of
Algorithm~\ref{alg:gpu-preprocessor} and the associated
solution recovery.

\begin{theorem}
\label{thm:preprocessing-postprocessing-correctness}
Suppose that the original LP $P$ has an optimal solution,
and let $P^r$ be the final preprocessed LP passed to Solve
in Algorithm~\ref{alg:gpu-preprocessor}.
Then, with probability one, the preprocessed LP $P^r$ has
the same optimal value as $P$, and the recovered solution
$x$ is optimal for $P$ whenever Solve returns an optimal
solution $x^r$ of $P^r$.

\end{theorem}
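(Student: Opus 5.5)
The plan is to show that each operator in Algorithm~\ref{alg:gpu-preprocessor} is a valid preprocessing operator in the sense of Definition~\ref{def:valid-preprocessing-operator}, and that validity is preserved under composition. The algorithm then applies a finite composition of valid operators, and its recovery is the composition of the recovery maps in reverse order. The phrase ``with probability one'' comes only from the randomized folding step. So the first task is to isolate the event on which the randomized partition refinement returns a genuinely stable partition pair, and show that this event has probability one.

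\textbf{Composition.} Suppose $\mathcal T_1$ and $\mathcal T_2$ are valid with recovery maps $\mathcal Q_1,\mathcal Q_2$, and $P$ has an optimal solution. Then $P_1=\mathcal T_1(P)$ has an optimal solution with $v(P_1)=v(P)$. Applying validity of $\mathcal T_2$ to $P_1$ shows that $P_2=\mathcal T_2(P_1)$ has an optimal solution with $v(P_2)=v(P_1)$. Moreover,
\[
\mathcal Q_1\bigl(\mathcal Q_2(\mathcal X^\star(P_2))\bigr)\subseteq \mathcal Q_1(\mathcal X^\star(P_1))\subseteq\mathcal X^\star(P).
\]
The while loop executes at most finitely many non-Cleanup phases, since $r$ is bounded by $K$. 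Each Cleanup repeat loop also terminates, because each effective pass strictly removes rows, columns or nonzeros, or fixes variables, and these are finite. Hence the final $P^r$ is reached by a finite composition, and induction gives $v(P^r)=v(P)$. Any optimal $x^r$ is mapped by Postsolve, which applies the presolve recovery maps in reverse order, into $\mathcal X^\star(P^0)$. Unfold then maps it into $\mathcal X^\star(P)$.

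\textbf{Validity of folding.} Theorem~\ref{thm:folding-validity} applies whenever the computed pair $(\mathcal P_r,\mathcal P_c)$ is stable. The randomized refinement separates rows or columns by comparing random linear hashes, namely SpMV outputs against random weight vectors. The output is always at least as fine as the color-refinement classes it is meant to compute, so there are no false splits that break stability. The only failure mode is a false merge: two indices whose block sums differ but whose random hashes coincide. For a fixed pair and a fixed iteration, such a collision means a nonzero linear form in the continuously distributed random weights vanishes. This is a measure-zero event. There are only finitely many pairs and iterations, so the union of these events has probability zero. Off that event, the final partition satisfies both conditions of Definition~\ref{def:folding-stable-revised-v2}, with condition (i) enforced by the initial coloring.

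\textbf{Validity of the presolve rules.} Each rule in Table~\ref{tab:presolve-strategies} is a valid operator with an explicit recovery map. For the classical rules this follows from the cited references. Each rule either preserves the feasible set while removing redundant data, or restricts to a face or subset that still contains an optimal solution, as in dual fixing and domination. Substitution rules are recovered by back-substitution. The phase wrappers (Startup, Cleanup, Propagation, Aggregation, Parallel Detection) apply batches of reductions via Scan--Plan--Apply. Here I must argue that the Plan step selects a conflict-free set, meaning reductions touching disjoint rows and columns or otherwise commuting. Then the batch equals a sequential composition and inherits validity.

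\textbf{Main obstacle.} I expect the piecewise-linear reformulation to be the hardest case. One must show that the recognized convex piecewise-linear program is equivalent to the LP, that presolving it is valid, and that re-encoding either representation gives an LP with the same optimal value. The recovery map must return the epigraph variables as the max of the affine pieces, which makes the recovered point optimal. The approach is to write both LP encodings as epigraph formulations of the same convex function $\max_k(a_k^\top x+b_k)$, and then show that optimal values coincide and that projection and lifting preserve optimality.
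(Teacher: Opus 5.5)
Your skeleton is the paper's: each operator in Algorithm~\ref{alg:gpu-preprocessor} is valid in the sense of Definition~\ref{def:valid-preprocessing-operator}, and validity composes. Your two-operator argument is what the paper's ``applying Definition~\ref{def:valid-preprocessing-operator} successively'' abbreviates. The recovery maps are applied in reverse order, and the only randomness is in folding. The paper, however, discharges the two nontrivial ingredients by citing stated results: stability with probability one from Theorem~\ref{thm:folding-refinement-partition}, and validity of the new rule from Theorem~\ref{thm:unified-presolve-validity}. You reopen both, and the second is where your write-up is incomplete.

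For the piecewise-linear rule you give only a plan, and that plan would not suffice. The Pattern~2 LP is a split-variable form, not an epigraph form. Its link to the recognized convex P-LP is only an equality of optimal values. Mapping $x$ to $(x_{[n]\setminus\{p,q\}},x_p-x_q)$ gives a P-LP-feasible point with objective no larger. Proving this uses $\ell_p=\ell_q=0$, $c_p+c_q\ge0$ and the sign conditions on $A_{ip}+A_{iq}$, by lowering $x_p,x_q$ by $\min\{x_p,x_q\}$. The same conditions are what make the recognized problem (with terms $d_iy+s_i|y|$) convex. Producing, for a general presolved P-LP, LP encodings that exhibit each pattern also needs an explicit construction. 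The plain epigraph LP of a maximum of affine pieces in general exhibits neither pattern; the paper instead applies $2\max\{a,b\}=a+b+|a-b|$ to the differences $g_{ik}-g_{i1}$. Either supply these arguments or simply invoke Theorem~\ref{thm:unified-presolve-validity}, as the paper does.

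Your folding argument is essentially the first paragraph of the appendix proof of Theorem~\ref{thm:folding-refinement-partition}, but one sentence is backwards. Because equal coefficient-sum vectors always receive equal signatures, every iterate is \emph{coarser} than every stable pair, not finer. Splits are not what could break stability anyway; condition~(ii) for the output is certified by the final unchanged round being collision-free. A few details need tightening:
\begin{itemize}
\item Make the measure-zero claim conditional on the earlier samples, since the linear form depends on the current partition.
\item Justify finiteness of the rounds by noting that parts only split.
\item Observe that the $\theta$-exit returns the singleton pair, which is trivially stable.
\end{itemize}
Termination of the Cleanup loop is presupposed by the statement, which refers to the final $P^r$. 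Your justification for it would in any case miss passes that only tighten bounds. Your batching concern is legitimate, but the paper likewise simply treats the implemented rules as valid operators.
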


\begin{proof}
By Theorem~\ref{thm:folding-refinement-partition}, the partition
pair used for folding is stable with probability one.
On this event, folding is valid by
Theorem~\ref{thm:folding-validity}.
The piecewise-linear reformulation rule is valid by
Theorem~\ref{thm:unified-presolve-validity}, and the remaining
presolve rules are valid preprocessing operators.
Applying Definition~\ref{def:valid-preprocessing-operator}
successively shows that $P^r$ and $P$ have the same optimal value.
If Solve returns an optimal solution $x^r$ of $P^r$,
postsolve and unfolding apply the corresponding recovery
maps in reverse order to recover an optimal solution $x$ of $P$.
\end{proof}

\subsection{GPU-Accelerated Folding}
\label{sec:gpu-accelerated-folding}

To construct a stable partition pair for folding, we refine
row and column partitions $\mathcal P_r,\mathcal P_c$ satisfying
condition~(i) of Definition~\ref{def:folding-stable-revised-v2}
until condition~(ii) also holds.
Classical methods refine one partition using a single part of
the other~\cite{Grohe2014DimensionReduction}.
This can require many steps with small workloads, limiting GPU
utilization.

To exploit GPU parallelism, we develop a randomized partition refinement
method for LPs, inspired by hashed color refinement for unweighted
graphs~\cite{Kersting2014PowerColorRefinement}.
Each refinement uses a single sparse matrix--vector multiplication (SpMV)
to combine coefficient sums over all parts of the opposite partition
into randomized signatures.
These signatures are then used to refine the current partitions.
Algorithm~\ref{alg:folding-gpu-refinement-partition} summarizes this method.
We use a threshold $\theta\in(0,1]$ to skip folding by returning
the trivial stable partition pair when neither the row count nor
the column count can be reduced substantially.

\begingroup
\setlength{\intextsep}{8pt plus 2pt minus 2pt}
\begin{algorithm}[!htbp]
\caption{Randomized partition refinement for problem~\eqref{eq:original-lp}}
\label{alg:folding-gpu-refinement-partition}
\begin{algorithmic}[1]
\Require Dimension threshold $\theta\in(0,1]$
\Ensure A stable partition pair $(\mathcal P_r,\mathcal P_c)$, with probability one
\State Call Algorithm~\ref{alg:induced-partition-gpu} with
$\bigl((c_j,\ell_j,u_j)\bigr)_{j\in[n]}$ to obtain $\mathcal P_c^0$ \Comment{\textit{Column initialization}}
\State Call Algorithm~\ref{alg:induced-partition-gpu} with
$\bigl((L_i,U_i)\bigr)_{i\in[m]}$ to obtain $\mathcal P_r^0$ \Comment{\textit{Row initialization}}
\State Initialize the refinement round counter $t$ to $0$
\Repeat
  \If{$|\mathcal P_r^t|>\theta m$ and $|\mathcal P_c^t|>\theta n$}
    \State \Return $(\{\{1\},\ldots,\{m\}\},\{\{1\},\ldots,\{n\}\})$
  \EndIf
  \State Sample $\xi\in\mathbb R^{|\mathcal P_r^t|}$ with independent standard normal entries
  \State Set $s\in\mathbb R^m$ with $s_i=\xi_{\pi_{\mathcal P_r^t}(i)}$ for $i\in[m]$, and compute $h_c=A^\top s$
  \State Call Algorithm~\ref{alg:induced-partition-gpu} with
  $\bigl((\pi_{\mathcal P_c^t}(j),(h_c)_j)\bigr)_{j\in[n]}$
  to obtain $\mathcal P_c^{t+1}$ \Comment{\textit{Column refinement}}
  \State Sample $\eta\in\mathbb R^{|\mathcal P_c^{t+1}|}$ with independent standard normal entries
  \State Set $v\in\mathbb R^n$ with $v_j=\eta_{\pi_{\mathcal P_c^{t+1}}(j)}$ for $j\in[n]$, and compute $h_r=Av$
  \State Call Algorithm~\ref{alg:induced-partition-gpu} with
  $\bigl((\pi_{\mathcal P_r^t}(i),(h_r)_i)\bigr)_{i\in[m]}$
  to obtain $\mathcal P_r^{t+1}$ \Comment{\textit{Row refinement}}
  \State Increment $t$ by $1$
\Until{$\mathcal P_r^t=\mathcal P_r^{t-1}$ and $\mathcal P_c^t=\mathcal P_c^{t-1}$}
\State \Return $(\mathcal P_r^t,\mathcal P_c^t)$
\end{algorithmic}
\end{algorithm}

Algorithm~\ref{alg:induced-partition-gpu} constructs the initial and refined
partitions used in Algorithm~\ref{alg:folding-gpu-refinement-partition}
by grouping indices whose corresponding entries in the input tuple are equal.
It uses sort-based grouping~\cite{HellersteinHaasWang1997}, which is well suited
to parallel execution on GPUs~\cite{HeEtAl2009,GurumurthyEtAl2023}.

\begin{algorithm}[!htbp]
\caption{Grouping indices with equal tuple entries}
\label{alg:induced-partition-gpu}
\begin{algorithmic}[1]
\Require A tuple $\lambda=(\lambda_i)_{i\in[k]}$, where $k$ is a positive integer
\Ensure \begin{tabular}[t]{@{}l@{}}
A partition $\mathcal P$ of $[k]$ such that
for all $i,j\in[k]$,
$\pi_{\mathcal P}(i)=\pi_{\mathcal P}(j)$ if and only if $\lambda_i=\lambda_j$
\end{tabular}
\State \begin{tabular}[t]{@{}l@{}}
Find a permutation $\sigma$ of $[k]$ such that\\
for all $i,j,i'\in[k]$ with $i\le j\le i'$,
if $\lambda_{\sigma(i)}=\lambda_{\sigma(i')}$, then $\lambda_{\sigma(j)}=\lambda_{\sigma(i)}$
\end{tabular}
\State \begin{tabular}[t]{@{}l@{}}
Set $b\in\{0,1\}^k$ with $b_1=1$ and,
for $i\in[k-1]$,
$b_{i+1}=1$ if and only if $\lambda_{\sigma(i+1)}\ne\lambda_{\sigma(i)}$
\end{tabular}
\State Compute $g\in\mathbb Z^k$ with $g_i=\sum_{j=1}^{i}b_j$ for each $i\in[k]$
\State \Return $\mathcal P=
\bigl\{\{\sigma(i)\mid i\in[k],\ g_i=j\}\mid j\in[g_k]\bigr\}$
\end{algorithmic}
\end{algorithm}
\endgroup

The following theorem provides theoretical guarantees for
Algorithm~\ref{alg:folding-gpu-refinement-partition}; its proof is given in
Appendix~\ref{app:folding-refinement-proof}.

\begin{theorem}
\label{thm:folding-refinement-partition}
For any $\theta\in(0,1]$, with probability one,
Algorithm~\ref{alg:folding-gpu-refinement-partition} terminates within
$2+\min\{m-|\mathcal P_r^0|,\,n-|\mathcal P_c^0|\}$ refinement rounds
and returns the unique coarsest stable partition pair
$(\mathcal P_r^\star,\mathcal P_c^\star)$ for the original LP if
$|\mathcal P_r^\star|\le\theta m$ or $|\mathcal P_c^\star|\le\theta n$,
and the trivial stable partition pair otherwise.
\end{theorem}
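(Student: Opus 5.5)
The plan is to track the sequence of partitions produced by Algorithm~\ref{alg:folding-gpu-refinement-partition} and prove four things in order: an invariant relating them to $(\mathcal P_r^\star,\mathcal P_c^\star)$, a probability-one separation lemma, a termination and counting argument, and finally the threshold case analysis.

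\textbf{Invariant.} Every partition produced is refined by the coarsest stable pair: $\mathcal P_r^\star$ refines $\mathcal P_r^t$ and $\mathcal P_c^\star$ refines $\mathcal P_c^t$ for all $t$. This holds deterministically, whatever $\xi,\eta$ are sampled, and I would prove it by induction.
\begin{itemize}[leftmargin=*]
\item \emph{Base case.} By condition~(i) of Definition~\ref{def:folding-stable-revised-v2}, the data $(c_j,\ell_j,u_j)$ are constant on parts of $\mathcal P_c^\star$ and $(L_i,U_i)$ are constant on parts of $\mathcal P_r^\star$. By the output guarantee of Algorithm~\ref{alg:induced-partition-gpu}, the star parts therefore lie inside parts of $\mathcal P_c^0$ and $\mathcal P_r^0$.
\item \emph{Inductive step for columns.} Since $s$ is constant on parts of $\mathcal P_r^t$, which are unions of parts $I$ of $\mathcal P_r^\star$, we can write $(h_c)_j=\sum_{I\in\mathcal P_r^\star} s_I\sum_{i\in I}A_{ij}$.
\item By condition~(ii), each inner sum is constant over $j$ in a part of $\mathcal P_c^\star$. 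Together with the induction hypothesis on $\pi_{\mathcal P_c^t}$, this shows that the tuple $(\pi_{\mathcal P_c^t}(j),(h_c)_j)$ is constant on star parts.
\item \emph{Inductive step for rows.} The same argument applies with $h_r=Av$.
\end{itemize}

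\textbf{Separation lemma.} Call a partition pair $(\mathcal Q_r,\mathcal Q_c)$ ``columns-stable'' if the column sums in condition~(ii) (the sums $\sum_{i\in I}A_{ij}$) are constant on each part of $\mathcal Q_c$ for every $I\in\mathcal Q_r$; define ``rows-stable'' analogously. The lemma: if $j,j'$ lie in the same part of $\mathcal P_c^t$ and $\sum_{i\in I}A_{ij}\neq\sum_{i\in I}A_{ij'}$ for some $I\in\mathcal P_r^t$, then the two columns are separated with probability one.
\begin{itemize}[leftmargin=*]
\item The difference $(h_c)_j-(h_c)_{j'}=\sum_{I}\xi_I\,(\sum_{i\in I}A_{ij}-\sum_{i\in I}A_{ij'})$ is a nonzero linear form in the Gaussian vector $\xi$.
\item Its zero set is a hyperplane, which has measure zero, so $(h_c)_j\ne(h_c)_{j'}$ almost surely.
\item Only finitely many pairs and rounds occur, so with probability one every refinement step separates exactly the indices that the opposite partition distinguishes.
\item Consequently $\mathcal P_c^{t+1}$ is the coarsest refinement of $\mathcal P_c^t$ that is columns-stable with respect to $\mathcal P_r^t$, and similarly for $\mathcal P_r^{t+1}$ with respect to $\mathcal P_c^{t+1}$.
\end{itemize}

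\textbf{Termination and round count.} I work on the probability-one event from the lemma.
\begin{itemize}[leftmargin=*]
\item \emph{Correct output at termination.} If the loop exits with both partitions unchanged, the final pair is stable. By the invariant it is refined by the star pair, and by coarsestness the star pair is refined by it. Hence it equals $(\mathcal P_r^\star,\mathcal P_c^\star)$.
\item \emph{Column count strictly grows in non-final rounds.} Suppose $\mathcal P_c^{t+1}=\mathcal P_c^t$ in some round $t\ge1$. Then $\mathcal P_r^t$ was already refined against $\mathcal P_c^t$ in round $t-1$, so the row refinement changes nothing, and the loop stops at the end of round $t$.
\item \emph{Row count strictly grows in non-final rounds.} Suppose instead $\mathcal P_r^{t+1}=\mathcal P_r^t$ while the columns changed. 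Then in round $t+1$ the column refinement is against the same $\mathcal P_r^t$ already used, so nothing changes, the rows then also do not change, and the loop stops.
\item \emph{Counting.} Thus every round other than the first and the last strictly increases both $|\mathcal P_r^t|$ and $|\mathcal P_c^t|$. These counts are bounded by $m$ and $n$, which yields the bound $2+\min\{m-|\mathcal P_r^0|,n-|\mathcal P_c^0|\}$.
\end{itemize}
This bookkeeping, in particular getting the additive constant exactly right (whether the round that detects no change is counted, and whether $t=0$ behaves like the other rounds), is the step I expect to need the most care.

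\textbf{Threshold cases.} By the invariant, $|\mathcal P_r^t|\le|\mathcal P_r^\star|$ and $|\mathcal P_c^t|\le|\mathcal P_c^\star|$ for all $t$.
\begin{itemize}[leftmargin=*]
\item \emph{Case $|\mathcal P_r^\star|\le\theta m$ or $|\mathcal P_c^\star|\le\theta n$.} The early-exit test can never fire, and the algorithm returns the star pair.
\item \emph{Case $|\mathcal P_r^\star|>\theta m$ and $|\mathcal P_c^\star|>\theta n$.} Consider the last round, in which nothing changes. That round begins with partitions already equal to the star pair. If the test fires at any earlier round, the trivial pair is returned directly. Otherwise it fires at the start of this last round, so the trivial pair is returned before the final \texttt{Until} check can output the star pair.
\item The trivial pair of singletons is always stable, which completes the statement.
\end{itemize}
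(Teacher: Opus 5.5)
Most of your argument matches the paper's proof, but one step fails. The parts that match are: agreement of the Gaussian signatures with exact refinement by coefficient-sum vectors on a probability-one event; the fact that the coarsest pair refines every iterate (you get this deterministically, slightly cleaner than the paper's version on the good event); stability at a round with no change; and the threshold analysis via $|\mathcal P_r^t|\le|\mathcal P_r^\star|$ and $|\mathcal P_c^t|\le|\mathcal P_c^\star|$.

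The failing step is the summary of your counting. The claim that every round other than the first and the last strictly increases both $|\mathcal P_r^t|$ and $|\mathcal P_c^t|$ is false. Your row bullet does not show that the rows grow in every non-final round. It shows only that if the rows stay fixed in round $t$ while the columns change, then round $t+1$ is the last. So the row partition can stay fixed in the penultimate round. For example, take $A=\begin{pmatrix}1&0&1\\1&2&0\end{pmatrix}$ with identical data for all rows and for all columns:
\begin{itemize}
\item Round $0$ splits the columns into $\{1,2\},\{3\}$ (column totals $2,2,1$). It also splits the rows into singletons (sums $(1,1)$ versus $(3,0)$ over these column parts).
\item Round $1$ splits columns $1$ and $2$ (entries $(1,1)$ versus $(0,2)$) but cannot change the rows, which are already singletons.
\item Round $2$ changes nothing.
\end{itemize}
Round $1$ is neither first nor last, yet $|\mathcal P_r|$ does not grow.

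The repair uses only what your two bullets actually prove. Index the rounds $0,\ldots,T$, with round $T$ the unchanged one.
\begin{itemize}
\item Your column bullet gives a strict increase of $|\mathcal P_c|$ in rounds $1,\ldots,T-1$.
\item Your row bullet, together with the stopping rule, shows that a round $t$ without row change forces $T\le t+1$. Hence $|\mathcal P_r|$ increases strictly in rounds $0,\ldots,T-2$.
\end{itemize}
Each count therefore increases $T-1$ times, so $T-1\le\min\{m-|\mathcal P_r^0|,\,n-|\mathcal P_c^0|\}$, and the $T+1$ rounds satisfy the stated bound. This is exactly the paper's accounting: with $K$ changing rounds indexed from $1$, rows change in rounds $1,\ldots,K-1$ and columns in rounds $2,\ldots,K$. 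You should also state that an early exit merely truncates the $\theta=1$ run driven by the same samples, so the bound persists. With this offset correction and that remark, your proof is complete.
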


For the returned stable partition pair, the folded LP is defined
by~\eqref{eq:folded-lp}. After the folded LP is solved, unfolding
recovers an optimal solution of the original LP
using~\eqref{eq:folding-lift}.

\subsection{GPU-Accelerated Presolve}
\label{sec:gpu-accelerated-presolve}
This subsection presents the presolve component of GPU-Preprocessor.
Section~\ref{sec:presolve-rules} summarizes the presolve phases
and introduces the piecewise-linear reformulation rule.
Section~\ref{sec:gpu-rule-implementation} presents the
Scan--Plan--Apply framework for batched presolve and
GPU implementations of more complex presolve strategies.
\subsubsection{Presolve Rules in GPU-Preprocessor}
\label{sec:presolve-rules}

Presolve begins with Startup, which applies presolve rules
designed for particular problem classes to reduce the number
of nonzeros in the constraint matrix.
Following Startup, the remaining presolve rules are organized
into four phases: Cleanup, Propagation, Aggregation, and
Parallel Detection, as summarized in
Table~\ref{tab:presolve-rule-classes}.
Cleanup applies trivial rules to simplify the LP.
Propagation derives implied bounds by propagating existing
bounds along individual rows and columns.
Aggregation reduces the problem size through variable
substitution. Parallel Detection handles parallel rows and columns.

\begin{table}[H]
\centering
\caption{Presolve rules by scheduling phase.}
\label{tab:presolve-rule-classes}
\vspace{4pt}
\small
\setlength{\tabcolsep}{4pt}
\begin{tabularx}{\linewidth}{@{}l>{\raggedright\arraybackslash}X@{}}
\toprule
Phase & Rules \\
\midrule
\textbf{Startup} &
Column domination;
zero-cost product variables;
piecewise-linear reformulation. \\
\addlinespace[8pt]
\textbf{Cleanup} &
Infeasible and fixed variables;
empty rows and columns;
singleton rows;
singleton columns. \\
\addlinespace[8pt]
\textbf{Propagation} &
Bound strengthening;
infeasible and redundant rows;
dual fixing. \\
\addlinespace[8pt]
\textbf{Aggregation} &
Doubleton equations. \\
\addlinespace[8pt]
\textbf{Parallel Detection} &
Parallel rows;
parallel columns. \\
\bottomrule
\end{tabularx}
\end{table}

In addition to existing presolve rules, we propose a new
\emph{piecewise-linear reformulation rule}
for LP representations of convex piecewise-linear programs
(P-LPs),
such as
$\ell_1$ regression~\cite{Wagner1959,BarrodaleRoberts1973},
$\ell_\infty$ regression~\cite{Wagner1959,BarrodaleYoung1966},
quantile regression~\cite{KoenkerBassett1978}, and
basis pursuit~\cite{ChenDonohoSaunders1998}.
This rule exploits the fact that a convex P-LP admits LP representations
exhibiting either of the following two structural patterns:

\begin{enumerate}[
  label=\textit{Pattern~\arabic*.},
  leftmargin=*
]
\item The LP contains two constraints
$x_j\geq x_k$ and $x_j\geq-x_k$, where $j\ne k$.

\item The LP contains two variables $x_p,x_q$ with $p\ne q$
satisfying $\ell_p=\ell_q=0$, $c_p+c_q\geq0$, and, for every row $i$,
$A_{ip}+A_{iq}\geq0$ if $U_i<+\infty$ and
$A_{ip}+A_{iq}\leq0$ if $L_i>-\infty$.
\end{enumerate}

\noindent
The rule first detects Pattern~1 or~2 in the LP and constructs
the associated convex P-LP. It then applies valid presolve to this problem.
Finally, it compares two LP representations of the presolved convex P-LP,
one exhibiting each pattern, and selects the representation with fewer
nonzeros in the constraint matrix as the presolved LP.
Figure~\ref{fig:unified-presolve-flow} illustrates this procedure.
\begin{figure}[H]
\centering
\includegraphics[width=\linewidth]{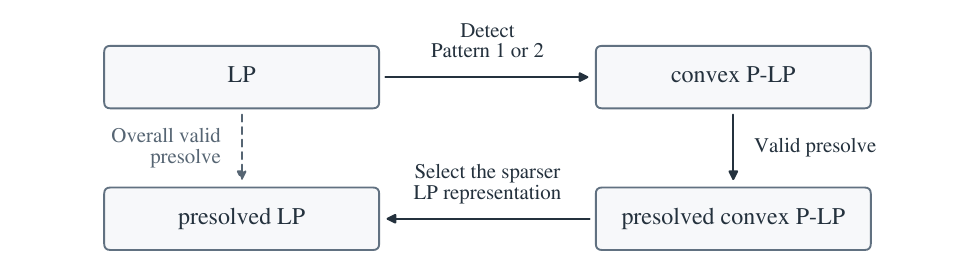}
\caption{Workflow of the piecewise-linear reformulation rule.}
\label{fig:unified-presolve-flow}
\end{figure}

For example, let $X\subseteq\mathbb R^n$ be a polyhedron, let
$A\in\mathbb R^{m\times n}$ and $b\in\mathbb R^m$, and let
$w\in\mathbb R^m$ have positive entries.
We consider presolve for LPs of the following form,
with variables $x\in\mathbb R^n$ and $y,z\in\mathbb R^m$:
\[
\min\{w^\top z\mid Ax+y=b,\ x\in X,\ z\geq y,\ z\geq-y\}.
\label{eq:unified-l1-original-lp}
\]
The LP exhibits Pattern~1 and represents the weighted
$\ell_1$ regression problem~\cite{BloomfieldSteiger1983,BarrodaleRoberts1978}
\[
\min_{x\in X}\
\|\operatorname{diag}(w)(b-Ax)\|_1.
\label{eq:unified-l1-underlying}
\]
For a nonempty index set $I\subseteq[m]$ satisfying
$A_{i,:}=A_{j,:}$ and $b_i=b_j$ for all $i,j\in I$,
choose any $k\in I$ and define $J=([m]\setminus I)\cup\{k\}$.
The presolved convex P-LP is
\begin{equation}
\min_{x\in X}\
\|\operatorname{diag}(\bar w_J)(b_J-A_{J,:}x)\|_1,
\label{eq:unified-l1-presolved-underlying}
\end{equation}
where $\bar w_J=(\bar w_j)_{j\in J}$ is defined by
$\bar w_k=\sum_{i\in I}w_i$ and
$\bar w_j=w_j$ for $j\in J\setminus\{k\}$.
Two LP representations of \eqref{eq:unified-l1-presolved-underlying},
exhibiting Patterns~1 and~2, respectively, are
\begin{align}
&\min\{\bar w_J^\top z_J\mid
A_{J,:}x+y_J=b_J,\ x\in X,\
z_J\geq y_J,\ z_J\geq-y_J\},
\label{eq:unified-l1-intermediate-lp}\\[3pt]
&\min\{\bar w_J^\top(y_J^++y_J^-)\mid
A_{J,:}x+y_J^+-y_J^-=b_J,\ x\in X,\
y_J^+,y_J^-\geq0\},
\label{eq:unified-l1-final-lp}
\end{align}
where $y_J,z_J,y_J^+,y_J^-\in\mathbb R^{|J|}$ have components
indexed by $J$.
The LP \eqref{eq:unified-l1-final-lp} has $3|J|$ fewer nonzero entries
in its constraint matrix than the LP \eqref{eq:unified-l1-intermediate-lp}
and is therefore selected as the presolved LP.
Postsolve keeps $x$ and recovers
\[
y_i=\begin{cases}
y_i^+-y_i^-, & i\in J,\\[2pt]
y_k^+-y_k^-, & i\in I\setminus\{k\},
\end{cases}
\qquad
z_i=\begin{cases}
y_i^++y_i^-, & i\in J,\\[2pt]
y_k^++y_k^-, & i\in I\setminus\{k\}.
\end{cases}
\]

The following theorem establishes the validity of this rule;
its proof is given in Appendix~\ref{app:unified-presolve-proof}.

\begin{theorem}
\label{thm:unified-presolve-validity}
The piecewise-linear reformulation rule is a valid preprocessing operator
in the sense of
Definition~\ref{def:valid-preprocessing-operator}.
\end{theorem}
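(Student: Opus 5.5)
The plan is to factor the rule into three maps and show that each one is a valid preprocessing operator in the sense of Definition~\ref{def:valid-preprocessing-operator}. Validity is preserved under composition: optimal values chain by equality, and the recovery maps compose in reverse order. It therefore suffices to treat three steps. Step (a) passes from the input LP exhibiting Pattern~1 or~2 to the recognized convex P-LP. Step (b) applies a valid presolve to that P-LP. Step (c) passes from the presolved P-LP to whichever of its two LP representations is selected.

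For step (a), I would handle the two patterns separately.

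\emph{Pattern~1.} Given $x_j\ge x_k$ and $x_j\ge -x_k$, the constraints force $x_j\ge |x_k|$. The objective and the remaining constraints determine how $x_j$ enters. In the intended setting (the weighted $\ell_1$ example), $x_j$ appears only in these two rows and has a positive cost. Minimizing over $x_j$ then gives $x_j=|x_k|$, so the LP is equivalent to the convex P-LP in which $x_j$ is replaced by $|x_k|$. The recovery sets $x_j=|x_k|$, and the optimal values coincide.

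\emph{Pattern~2.} Take any feasible point and set $t=\min\{x_p,x_q\}\ge 0$. Replace $x_p\mapsto x_p-t$ and $x_q\mapsto x_q-t$. The row activities change by $-t(A_{ip}+A_{iq})$, which moves away from any finite bound by the sign conditions. Bounds are preserved because $\ell_p=\ell_q=0$. The objective changes by $-t(c_p+c_q)\le 0$. Hence there is an optimal solution with $\min\{x_p,x_q\}=0$. On such points, $x_p-x_q=:y$ determines $x_p=y^+$ and $x_q=y^-$, and the cost becomes the convex piecewise-linear term $c_py^++c_qy^-$. This yields the P-LP, with the same optimal value, and its optimal points lift back.

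For step (b), I would verify each P-LP reduction directly; the main instance is merging identical rows $I$ into $k$ with weight $\bar w_k=\sum_{i\in I}w_i$. For every $x$, the identical residuals give $\sum_{i\in I}w_i|b_i-A_{i,:}x|=\bar w_k|b_k-A_{k,:}x|$. The two objective functions therefore agree pointwise on $X$, so both the optimal values and the optimal sets in $x$ coincide.

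For step (c), I would show that both \eqref{eq:unified-l1-intermediate-lp} and \eqref{eq:unified-l1-final-lp} represent \eqref{eq:unified-l1-presolved-underlying}. In each case minimizing out the auxiliary variables returns the P-LP objective: for $z\ge|y|$ with $\bar w>0$ the minimum is $z=|y|$, and for $y^+-y^-=y$ with $y^\pm\ge0$ the minimum is $y^+=y^+$, $y^-=y^-$. Thus any optimal solution of either LP maps to an optimal $x$ with the same value. Finally I would compose the recovery maps and check that they reproduce the displayed postsolve formulas. In particular, $z_i=y_i^++y_i^-$ equals $|y_i|$ at an optimum with complementary $y^\pm$. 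If the solver returns a non-complementary optimum, then $\bar w>0$ forces $\min\{y_i^+,y_i^-\}=0$ anyway, since otherwise the cost could be strictly decreased.

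The main obstacle is step (a): stating precisely which side conditions the detected patterns must satisfy for the P-LP to be equivalent to the original LP. For Pattern~1 this means that $x_j$ occurs only in the two rows and has positive cost; for Pattern~2 it means the complementarity argument above. The proof has to match the detection conditions exactly. I would first settle Pattern~2 rigorously via the shift argument, then reduce Pattern~1 to it by writing $x_k=x_k^+-x_k^-$ and $x_j=x_k^++x_k^-$ plus a nonnegative slack.
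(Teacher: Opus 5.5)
Your Pattern~2 shift argument is essentially the paper's; it uses the same $\delta=\min\{x_p,x_q\}$. You should still state that the rows become $d_iy+s_i|y|$ with $s_i=(A_{ip}+A_{iq})/2$, and that the sign conditions are exactly what make these rows convex.

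\textbf{Gap in step~(a): Pattern~1.} By making recognition eliminate $x_j$, you impose conditions that are not part of Pattern~1: $x_j$ occurs only in the two rows and has positive cost. Your argument therefore covers only a subclass of the LPs the rule applies to. Your fallback does not repair this. Writing $x_k=x_k^+-x_k^-$ and $x_j=x_k^++x_k^-+s$ does not reduce to Pattern~2 in general, because the new pair would need combined cost $2c_j\geq0$ and coefficient sums $2A_{ij}$ of the correct sign in every row with a finite bound, which need not hold. The missing observation is that nothing has to be eliminated. The inequalities $x_j\geq x_k$ and $x_j\geq-x_k$ together form the single convex piecewise-linear constraint $\max\{x_k-x_j,-x_k-x_j\}=|x_k|-x_j\leq0$. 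All other rows and bounds are affine. Hence the Pattern~1 LP already \emph{is} a convex P-LP with the same feasible set and objective, and recognition is the identity with identity recovery. Replacing $x_j$ by $|x_k|$, when $x_j$ appears only in those rows with positive cost, is then a presolve reduction on the P-LP and belongs to your step~(b).

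\textbf{Gap in step~(c), the more substantive one.} You only check that \eqref{eq:unified-l1-intermediate-lp} and \eqref{eq:unified-l1-final-lp} represent the weighted $\ell_1$ problem. The rule, however, re-represents whatever convex P-LP comes out of presolve: an arbitrary problem $\min f_0(\xi)$ s.t.\ $f_i(\xi)\leq0$, $E\xi=h$, where each $f_i=\max_k g_{ik}$ is a finite maximum of affine functions. For every such problem you need one LP exhibiting Pattern~1 and one exhibiting Pattern~2, each with the same optimal value. Every LP optimum must project to a P-LP optimum, and every P-LP optimum must extend to an LP optimum. This is the content of the paper's Proposition~\ref{prop:app-universal-patterns}:
\begin{itemize}
\item Introduce $y_{ik}=g_{ik}(\xi)-g_{i1}(\xi)$ for $k\geq2$, with $z_{ik}\geq\pm y_{ik}$ for Pattern~1.
\item For Pattern~2, use instead $p_{ik}-q_{ik}=g_{ik}(\xi)-g_{i1}(\xi)$ with $p_{ik},q_{ik}\geq0$, and let $p_{ik}+q_{ik}$ play the role of $z_{ik}$.
\item Replace $f_i(\xi)\leq0$ by $g_{ik}(\xi)+g_{i1}(\xi)+z_{ik}\leq0$, and $f_0(\xi)\leq t$ by $g_{0k}(\xi)+g_{01}(\xi)+z_{0k}\leq2t$. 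This uses $2\max\{a,b\}=a+b+|a-b|$.
\end{itemize}
With this construction, and with step~(b) taken as an arbitrary valid presolve of the P-LP as the paper does, your composition-of-recovery-maps argument goes through. As written, it establishes validity only for the $\ell_1$ instance.
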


\subsubsection{GPU Implementation of Presolve Rules}
\label{sec:gpu-rule-implementation}

Some presolve rules are difficult to parallelize efficiently
on GPUs. Variable substitutions in the Aggregation phase
can cause conflicting updates, while rules in the Parallel
Detection phase require repeated comparisons of
indices and values of nonzero entries.
The scheduling scheme in Algorithm~\ref{alg:gpu-preprocessor}
therefore first executes the relatively inexpensive Cleanup
and Propagation phases after Startup.
The Aggregation and Parallel Detection phases are invoked
when the preceding phases make limited progress, with returns
to Cleanup to exploit newly enabled reductions.

We illustrate our specialized GPU implementations with
the doubleton equation rule in the Aggregation phase
and the parallel row and column rules in the Parallel Detection phase.
For the doubleton equation rule, candidate equations are detected
in parallel. A subset of equations that do not share variables
is then selected for batched substitution on the GPU,
avoiding conflicting updates to variable bounds and objective
coefficients. Changes to row bounds are summed using atomic
operations when substitutions affect the same row.
A batched update then applies the substitutions to $A$
and removes the eliminated rows and columns.
The transpose $A^\top$ is reconstructed from the updated $A$. For the parallel row and column rules, we first identify
candidates using GPU SpMV operations with random vectors.
These candidates are then grouped by hashes of their
indices and normalized values of nonzero entries.
Proportionality checks are performed only within these
groups before the corresponding rules are applied.

Repeated matrix reconstruction across presolve rules is also
costly on GPUs. Many rules, however, only delete rows or columns
and update bounds or objective data without changing retained
matrix coefficients. We use row and column deletion masks
to mark rows and columns for removal. Using these masks,
our Scan--Plan--Apply framework
accumulates updates across rules and defers matrix reconstruction.
\textsc{Scan} collects row and column information in parallel,
using the deletion masks to skip entries in rows or columns
marked for removal. \textsc{Plan} evaluates presolve rules
using this information and records deletions and vector
updates in a shared plan $\Pi$.
\textsc{Apply} then commits the accumulated vector updates
and reconstructs $A$ and $A^\top$ once for the deletions
recorded across multiple rules.

\section{Numerical Experiments}

We compare GPU-Preprocessor with PSLP~\cite{CederbergBoyd2026}
and the presolver in Gurobi~\cite{Gurobi2025}.
Section~\ref{sec:experimental-setup} describes the benchmarks,
hardware and software settings, and evaluation metrics.
Section~\ref{sec:ablation} examines the contributions of
folding and presolve.
Section~\ref{sec:presolve-performance} compares preprocessing
time and reduction strength.
Section~\ref{sec:end-to-end} evaluates solve and total times
when each method is combined with HPR-LP.
Finally, Section~\ref{sec:gpu-advantage-at-scale} examines
how performance changes with problem size on a controlled
family of QAP LP relaxations.

\subsection{Experimental Setup}
\label{sec:experimental-setup}

\paragraph{Benchmark datasets.}
We use three benchmark groups to evaluate GPU-Preprocessor on
general-purpose, large-scale sparse, and highly structured LPs. We use
49 publicly available instances from Mittelmann's LP benchmark
set~\cite{Mittelmann1999}. Following the size
classification in HPR-LP~\cite{ChenEtAl2026}, we select the 18
root-node LP relaxations from MIPLIB 2017~\cite{GleixnerEtAl2021} whose
constraint matrices contain more than $10^7$ nonzeros; we refer to this
subset as MIPLIB Large. QAP20 consists of 20 large, highly structured
LPs generated from QAPLIB instances~\cite{BurkardEtAl1997}.

\paragraph{Hardware.}
All experiments run on the same server equipped with two
Intel Xeon Gold 6548Y+ CPUs, each with 32 physical cores,
and four NVIDIA H100 PCIe GPUs, each with 80 GB of memory.
GPU-Preprocessor and HPR-LP use one GPU per run, while
PSLP and Gurobi perform presolve on the CPUs of the same
server.

\paragraph{Setup.}
We compare GPU-Preprocessor against PSLP version 0.0.11
and Gurobi Presolve version 12.0.2 under an academic license.
Unless otherwise stated, GPU-Preprocessor enables both
folding and presolve, PSLP uses its full configuration,
and Gurobi uses automatic presolve
(\texttt{Presolve=-1}).
To evaluate the effect of preprocessing on solver performance,
we use HPR-LP-C 0.1.3, the C implementation of HPR-LP.
The HPR-LP baseline disables both folding and presolve.

Preprocessing time is measured as wall-clock time,
excluding MPS parsing and CPU--GPU transfers of LP data.
For GPU-Preprocessor, it includes all enabled folding
and presolve stages.
Solve time is the algorithm time reported by HPR-LP-C
on the preprocessed LP, or on the original LP when
preprocessing is disabled.
For each instance, total time is the sum of preprocessing
and solve times; it equals solve time when preprocessing
is disabled.
Postsolve and validation on the original LP are excluded
from these timing metrics.

\paragraph{Performance metrics.}
We use the following metrics.
\begin{itemize}[leftmargin=*]
    \item \textbf{Shifted geometric mean.} For times \(t_i\) measured in seconds over \(K\) instances,
we report
    \[
    \operatorname{SGM10}(t)
    =
    \left(\prod_{i=1}^{K}(t_i+10)\right)^{1/K}-10.
    \]
We use SGM10 for preprocessing, solve, and total times; the shift reduces
the influence of very small values. Total time is computed for
each instance before applying SGM10.

    \item \textbf{Solved instances.}
    A run is counted as solved if HPR-LP meets its relative
    Karush--Kuhn--Tucker (KKT) stopping criterion with tolerance
    \(\epsilon=10^{-6}\) within the solve-time limit.
    For preprocessed runs, the criterion is evaluated on the preprocessed LP. The solve-time limit is 1000 seconds and excludes preprocessing time.
    Runs that do not meet the stopping criterion within the limit are counted
    as unsolved, and their solve times are set to that limit when computing
    aggregate metrics.

    \item \textbf{Reduction ratios.} We define
    \[
    r_m=\frac{m_{\mathrm{red}}}{m},\qquad
    r_n=\frac{n_{\mathrm{red}}}{n},\qquad
    r_{\mathrm{nnz}}=\frac{\mathrm{nnz}_{\mathrm{red}}}{\mathrm{nnz}}.
    \]
    The reported ratios are arithmetic means over each benchmark group, and smaller values indicate stronger reductions.
\end{itemize}

\subsection{Ablation Studies}
\label{sec:ablation}

We evaluate the contributions of folding and presolve by
comparing three configurations of HPR-LP: no preprocessing,
folding only, and folding followed by presolve.
The comparisons cover all three benchmark groups and follow
the setup in Section~\ref{sec:experimental-setup}.

Table~\ref{tab:ablation-components} shows that folding alone
reduces the mean nonzero ratio and lowers total-time SGM10
by 3.6\%--42.9\% relative to no preprocessing across
the three benchmark groups.
Adding presolve further reduces the mean nonzero ratio
and lowers total-time SGM10 by 31.7\%--73.5\% relative
to folding alone, despite the additional preprocessing cost.
On Mittelmann, the solved count increases from 46 without
preprocessing to 47 with folding alone and 49 with
folding followed by presolve. These results highlight the importance of both folding
and presolve in reducing problem size and total solution time.

\begin{table}[H]
\centering
\caption{Ablation study of folding and presolve on the three
benchmark groups. Bold values indicate the lowest total-time
SGM10 in each group.}
\label{tab:ablation-components}
\vspace{4pt}
\small
\setlength{\tabcolsep}{4pt}
\resizebox{\linewidth}{!}{%
\begin{tabular}{llrrrrrrr}
\toprule
Dataset & Preprocessing & Solved & Preprocessing time (s)
& Solve time (s) & Total time (s)
& \(r_m\) & \(r_n\) & \(r_{\mathrm{nnz}}\) \\
\midrule
Mittelmann
& None
& 46 & 0.000 & 14.456 & 14.456 & 1.000 & 1.000 & 1.000 \\
& Folding only
& 47 & 0.083 & 13.831 & 13.934 & 0.768 & 0.747 & 0.746 \\
& Folding + presolve
& 49 & 0.184 & 5.615 & \textbf{5.797} & 0.583 & 0.580 & 0.572 \\
\midrule
MIPLIB Large
& None
& 17 & 0.000 & 27.322 & 27.322 & 1.000 & 1.000 & 1.000 \\
& Folding only
& 17 & 0.197 & 15.399 & 15.606 & 0.519 & 0.509 & 0.454 \\
& Folding + presolve
& 17 & 0.292 & 10.407 & \textbf{10.666} & 0.402 & 0.384 & 0.342 \\
\midrule
QAP20
& None
& 20 & 0.000 & 33.172 & 33.172 & 1.000 & 1.000 & 1.000 \\
& Folding only
& 20 & 0.147 & 24.995 & 25.207 & 0.744 & 0.744 & 0.744 \\
& Folding + presolve
& 20 & 0.271 & 6.407 & \textbf{6.687} & 0.014 & 0.372 & 0.301 \\
\bottomrule
\end{tabular}%
}
\end{table}

\subsection{Preprocessing Performance}
\label{sec:presolve-performance}

We compare GPU-Preprocessor, PSLP, and Gurobi Presolve
in terms of preprocessing time and reduction strength
on the three benchmark groups.
Table~\ref{tab:preprocessing-performance} shows that
GPU-Preprocessor achieves speedups of
\(6.0\times\)--\(43.9\times\) over PSLP and
\(23.2\times\)--\(677.7\times\) over Gurobi Presolve
in terms of preprocessing-time SGM10.
Together with its lower mean row, column, and nonzero
ratios on all three groups, these speedups show that
GPU-Preprocessor improves both preprocessing efficiency
and reduction strength relative to the baselines.

\begin{table}[H]
\centering
\caption{Preprocessing time and reduction strength on the
three benchmark groups. Bold values indicate the lowest
value for each metric in each group.}
\label{tab:preprocessing-performance}
\vspace{4pt}
\small
\setlength{\tabcolsep}{4pt}
\begin{tabular}{llrrrr}
\toprule
Dataset & Preprocessor & Preprocessing time (s)
& \(r_m\) & \(r_n\) & \(r_{\mathrm{nnz}}\) \\
\midrule
Mittelmann & Gurobi
& 4.276 & 0.675 & 0.742 & 0.722 \\
& PSLP
& 1.112 & 0.813 & 0.824 & 0.798 \\
& GPU-Preprocessor
& \textbf{0.184} & \textbf{0.583}
& \textbf{0.580} & \textbf{0.572} \\
\midrule
MIPLIB Large & Gurobi
& 9.903 & 0.522 & 0.557 & 0.525 \\
& PSLP
& 3.214 & 0.600 & 0.635 & 0.608 \\
& GPU-Preprocessor
& \textbf{0.292} & \textbf{0.402}
& \textbf{0.384} & \textbf{0.342} \\
\midrule
QAP20 & Gurobi
& 183.654 & 0.019 & 0.500 & 0.405 \\
& PSLP
& 11.885 & 0.019 & 0.500 & 0.405 \\
& GPU-Preprocessor
& \textbf{0.271} & \textbf{0.014}
& \textbf{0.372} & \textbf{0.301} \\
\bottomrule
\end{tabular}
\end{table}

\subsection{End-to-End Solver Performance}
\label{sec:end-to-end}

To evaluate whether preprocessing reduces total solution time,
we compare HPR-LP without preprocessing with HPR-LP combined
with Gurobi Presolve, PSLP, or GPU-Preprocessor.

Table~\ref{tab:end-to-end-performance} shows that HPR-LP
with GPU-Preprocessor achieves the lowest total-time SGM10
on all three benchmark groups.
Relative to HPR-LP without preprocessing, it achieves
total-time speedups of \(2.5\times\)--\(5.0\times\).
The speedups over HPR-LP with PSLP and Gurobi Presolve
are \(1.5\times\)--\(3.0\times\) and
\(1.9\times\)--\(29.0\times\), respectively.
On Mittelmann, it solves all 49 instances, compared with
46--48 for the other configurations.
All four configurations solve 17 instances on MIPLIB Large
and 20 on QAP20. These results show that GPU-Preprocessor improves the
end-to-end performance of HPR-LP, as the reduction in
solve time exceeds the preprocessing cost.

\begin{table}[H]
\centering
\caption{End-to-end performance of HPR-LP with different
preprocessing methods. Bold values indicate the lowest
total-time SGM10 in each group.}
\label{tab:end-to-end-performance}
\vspace{4pt}
\small
\setlength{\tabcolsep}{5pt}
\resizebox{\linewidth}{!}{%
\begin{tabular}{llrrrr}
\toprule
Dataset & Method & Solved & Preprocessing time (s)
& Solve time (s) & Total time (s) \\
\midrule
Mittelmann & HPR-LP
& 46 & 0.000 & 14.456 & 14.456 \\
& HPR-LP + Gurobi
& 47 & 4.276 & 8.704 & 13.873 \\
& HPR-LP + PSLP
& 48 & 1.112 & 7.964 & 9.084 \\
& HPR-LP + GPU-Preprocessor
& 49 & 0.184 & 5.615 & \textbf{5.797} \\
\midrule
MIPLIB Large & HPR-LP
& 17 & 0.000 & 27.322 & 27.322 \\
& HPR-LP + Gurobi
& 17 & 9.903 & 10.689 & 20.534 \\
& HPR-LP + PSLP
& 17 & 3.214 & 12.038 & 15.721 \\
& HPR-LP + GPU-Preprocessor
& 17 & 0.292 & 10.407 & \textbf{10.666} \\
\midrule
QAP20 & HPR-LP
& 20 & 0.000 & 33.172 & 33.172 \\
& HPR-LP + Gurobi
& 20 & 183.654 & 8.080 & 194.165 \\
& HPR-LP + PSLP
& 20 & 11.885 & 7.934 & 20.220 \\
& HPR-LP + GPU-Preprocessor
& 20 & 0.271 & 6.407 & \textbf{6.687} \\
\bottomrule
\end{tabular}%
}
\end{table}

\subsection{Scalability on Extremely Large LPs}
\label{sec:gpu-advantage-at-scale}

We evaluate the scalability of GPU-Preprocessor using a family
of QAP LP relaxations generated from the QAPLIB instance
\texttt{wil100} with Hinder's large-scale LP test-problem
generator~\cite{hinder2024largescalelp}.
The underlying quadratic assignment problem is
\begin{equation}
    \min_{\pi\in\mathcal{S}_N}\quad
    \sum_{i=1}^{N}\sum_{k=1}^{N} F_{ik}D_{\pi(i),\pi(k)},
    \label{eq:qap-wil100-scale}
\end{equation}
where \(F\) is the flow matrix, \(D\) is the distance matrix,
and \(\mathcal{S}_N\) is the set of permutations of
\(N\) locations.
We construct instances for \(N\in\{20,40,60,80,100\}\)
by extracting the leading \(N\times N\) principal submatrices
of the flow and distance matrices of \texttt{wil100}.
All instances use the Adams--Johnson LP relaxation implemented
in the generator, with the LP formulation and column-scaling
rule fixed across problem sizes. At \(N=100\), the resulting LP has \(1{,}980{,}200\) rows,
\(49{,}015{,}000\) columns, and \(198{,}020{,}000\) nonzeros.

GPU-Preprocessor requires less than \(0.4\) seconds of
preprocessing at all five tested QAP sizes, as shown in
Figure~\ref{fig:qap-wil100-presolve-scale}.
PSLP is faster at \(N=20\), while GPU-Preprocessor achieves
lower preprocessing times than both baselines at the tested
sizes \(N\geq 40\).
At \(N=100\), it achieves the maximum preprocessing speedups
of \(126.3\times\) over PSLP and \(2{,}207.9\times\)
over Gurobi Presolve.

Table~\ref{tab:qap-wil100-reduction} shows that
GPU-Preprocessor also achieves lower row, column, and nonzero
ratios than both baselines at every tested size.
When combined with HPR-LP, it yields the lowest solve time
and total time across all five sizes.
At \(N=100\), the total-time speedups are \(16.0\times\)
over HPR-LP with PSLP and \(105.9\times\) over HPR-LP
with Gurobi Presolve.
These results show that the performance advantage of
GPU-Preprocessor becomes greater as problem size increases
within the tested QAP family.

\begin{figure}[H]
    \centering
    \includegraphics[width=0.92\linewidth]
    {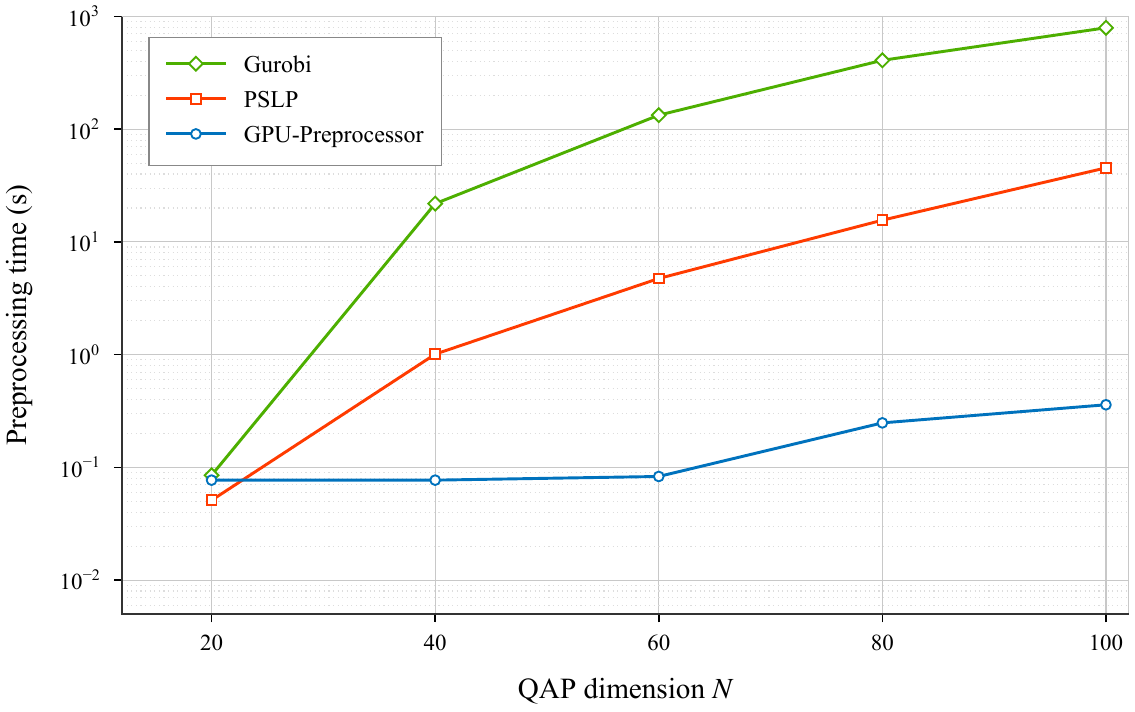}
    \caption{Preprocessing time versus QAP dimension \(N\).
    The vertical axis is logarithmic.}
    \label{fig:qap-wil100-presolve-scale}
\end{figure}

\begin{table}[H]
\centering
\caption{End-to-end performance and reduction strength for
individual QAP LP relaxations derived from \texttt{wil100}.
Bold values indicate the lowest value for each metric
at each \(N\).}
\label{tab:qap-wil100-reduction}
\vspace{4pt}
\small
\setlength{\tabcolsep}{3pt}
\begin{tabular}{rlrrrrrr}
\toprule
\(N\) & Preprocessor & Preprocessing time (s)
& Solve time (s) & Total time (s)
& \(r_m\) & \(r_n\) & \(r_{\mathrm{nnz}}\) \\
\midrule
20 & Gurobi
& 0.085 & 0.733 & 0.818 & 1.000 & 1.000 & 1.000 \\
& PSLP
& \textbf{0.051} & 0.779 & 0.829 & 1.000 & 1.000 & 1.000 \\
& GPU-Preprocessor
& 0.077 & \textbf{0.647} & \textbf{0.724}
& \textbf{0.239} & \textbf{0.226} & \textbf{0.227} \\
\midrule
40 & Gurobi
& 21.832 & 22.487 & 44.319 & 0.963 & 1.000 & 0.963 \\
& PSLP
& 1.009 & 3.864 & 4.873 & 1.000 & 1.000 & 1.000 \\
& GPU-Preprocessor
& \textbf{0.077} & \textbf{1.400} & \textbf{1.477}
& \textbf{0.239} & \textbf{0.228} & \textbf{0.228} \\
\midrule
60 & Gurobi
& 133.085 & 16.947 & 150.033 & 0.992 & 1.000 & 0.992 \\
& PSLP
& 4.735 & 11.962 & 16.698 & 1.000 & 1.000 & 1.000 \\
& GPU-Preprocessor
& \textbf{0.083} & \textbf{3.600} & \textbf{3.683}
& \textbf{0.240} & \textbf{0.230} & \textbf{0.230} \\
\midrule
80 & Gurobi
& 408.014 & 60.508 & 468.521 & 0.995 & 1.000 & 0.995 \\
& PSLP
& 15.580 & 46.268 & 61.848 & 1.000 & 1.000 & 1.000 \\
& GPU-Preprocessor
& \textbf{0.248} & \textbf{8.166} & \textbf{8.414}
& \textbf{0.241} & \textbf{0.232} & \textbf{0.232} \\
\midrule
100 & Gurobi
& 792.621 & 181.255 & 973.876 & 0.995 & 1.000 & 0.995 \\
& PSLP
& 45.357 & 101.869 & 147.227 & 1.000 & 1.000 & 1.000 \\
& GPU-Preprocessor
& \textbf{0.359} & \textbf{8.839} & \textbf{9.199}
& \textbf{0.122} & \textbf{0.094} & \textbf{0.094} \\
\bottomrule
\end{tabular}
\end{table}

\section{Conclusion}

This paper presented GPU-Preprocessor, a GPU-native LP preprocessing
framework that integrates folding and presolve.
To efficiently implement folding on GPUs, we adapted a randomized
partition refinement algorithm using sparse matrix--vector products
and parallel sorting.
For presolve, we introduced a new piecewise-linear reformulation
rule to obtain sparser LP representations and developed an efficient
GPU implementation that combines phase scheduling with a
Scan--Plan--Apply design to coordinate presolve rules and reduce
sparse-matrix reconstruction overhead through batched updates.
Numerical experiments demonstrate that GPU-Preprocessor achieves
effective model reductions and substantial preprocessing speedups,
leading to improved end-to-end LP solution performance.
An interesting direction for future research is to develop
GPU-native preprocessing techniques for convex quadratic
composite programming.

~\\

\noindent {\bf Acknowledgments.}
The work of Defeng Sun was supported by the Research Center for Intelligent Operations Research, RGC {Senior Research Fellow Scheme No. SRFS2223-5S02}, and  {GRF Project No. 15307523}. The work of Yancheng Yuan was supported by the NSFC Young Scientists Fund (Project No. 12501440) and the Research Center for Intelligent Operations Research. The work of Xinyuan Zhao was supported in part by the National Natural Science Foundation of China under Project {No. 12271015}.

\bibliographystyle{abbrv}
\bibliography{references}

\clearpage
\appendix

\section{Proof of Theorem~\ref{thm:folding-refinement-partition}}
\label{app:folding-refinement-proof}

\begin{proof}
First take $\theta=1$. Since updates only split parts, at most
$m+n-|\mathcal P_r^0|-|\mathcal P_c^0|$ rounds change a partition.
In column refinement, set
$a_j=(\sum_{i\in I}A_{ij})_{I\in\mathcal P_r^t}$.
Conditional on preceding samples, for $j,j'$ in the same current part
with $a_j\ne a_{j'}$,
\[
(h_c)_j-(h_c)_{j'}=\xi^\top(a_j-a_{j'})
\sim\mathcal N(0,\|a_j-a_{j'}\|_2^2),
\]
so their signatures differ with probability one; equal vectors give
equal signatures. The same argument applies to rows, conditional on
the updated column partition. Since only finitely many comparisons
occur, with probability one every refinement agrees with refinement
by the full coefficient-sum vectors. Work on this event below.

Initialization and refinement preserve condition~(i) of
Definition~\ref{def:folding-stable-revised-v2}, and a round with no
changes establishes condition~(ii). Any stable pair
$(\mathcal S_r,\mathcal S_c)$ refines the initial pair and, inductively,
every subsequent pair: each current row part is a union of parts of
$\mathcal S_r$, so columns in one part of $\mathcal S_c$ have equal
sums over it and remain together; the same reasoning applies to rows
after column refinement. Thus the returned stable pair is the unique
coarsest pair $(\mathcal P_r^\star,\mathcal P_c^\star)$.

Let $K$ count the rounds that change a partition. If row refinement
makes no change in round $t$, the pair is already stable.
For $t\geq2$, if column refinement makes no change, neither does row
refinement, since the preceding row step enforced the same equalities.
Hence, when $K\geq1$, row and column partitions must change in rounds
$1,\ldots,K-1$ and $2,\ldots,K$, respectively, giving
\[
K-1\leq\min\{m-|\mathcal P_r^0|,\,n-|\mathcal P_c^0|\}.
\]
Including the final unchanged round gives the claimed bound;
the case $K=0$ is immediate.

For general $\theta$, use the same samples until an early exit.
The part counts never exceed those of
$(\mathcal P_r^\star,\mathcal P_c^\star)$.
Thus, if $|\mathcal P_r^\star|\leq\theta m$ or
$|\mathcal P_c^\star|\leq\theta n$, the threshold test never succeeds
and the coarsest pair is returned. Otherwise, it succeeds no later
than the start of the final unchanged round, returning the trivial
stable pair. Early exit cannot increase the number of rounds.
\end{proof}

\section{Proof of Theorem~\ref{thm:unified-presolve-validity}}
\label{app:unified-presolve-proof}

In this appendix, a convex piecewise-linear function is a finite
maximum of affine functions:
\begin{equation}
f_i(\xi)=\max_{1\leq k\leq N_i}
\{\alpha_{ik}^{\top}\xi+\beta_{ik}\},
\qquad
\alpha_{ik}\in\mathbb R^d,\quad \beta_{ik}\in\mathbb R,
\label{eq:app-pwl-function}
\end{equation}
where $N_i$ is a positive integer. A convex P-LP has the form
\begin{equation}
\begin{aligned}
\min_{\xi\in\mathbb R^d}\quad & f_0(\xi)\\
\mathrm{s.t.}\quad & f_i(\xi)\leq0,\qquad i=1,\ldots,s,\\
& E\xi=h,
\end{aligned}
\label{eq:app-cpwl-problem}
\end{equation}
where $E\in\mathbb R^{r\times d}$, $h\in\mathbb R^r$, and each
$f_i$, including $f_0$, has the form~\eqref{eq:app-pwl-function}.

\subsection{From LPs to Convex P-LPs}

For an LP exhibiting either pattern, we construct a convex P-LP with the
same optimal value as the original LP and show how to recover an optimal
LP solution from an optimal solution of the convex P-LP.

\begin{proposition}[Pattern 1]
\label{prop:app-pattern1-recognition}
Suppose problem~\eqref{eq:original-lp} exhibits Pattern~1.
Let $\widehat L\leq\widehat A x\leq\widehat U$ denote the row
constraints remaining after removing $x_j\geq x_k$ and $x_j\geq-x_k$.
The following problem is a convex P-LP with the same feasible solutions
and objective function as problem~\eqref{eq:original-lp}:
\begin{equation}
\begin{aligned}
\min_x\quad & c^\top x\\
\mathrm{s.t.}\quad
& |x_k|-x_j\leq0,\\
& \widehat L\leq\widehat A x\leq\widehat U,\\
& \ell\leq x\leq u.
\end{aligned}
\label{eq:presolve-pattern1-plp}
\end{equation}
\end{proposition}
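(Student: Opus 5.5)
The plan is to establish two things separately. First, the feasible set of \eqref{eq:presolve-pattern1-plp} coincides with that of \eqref{eq:original-lp}; the objectives $c^\top x$ are literally identical, so nothing needs to be checked there. Second, \eqref{eq:presolve-pattern1-plp} is a convex P-LP in the form \eqref{eq:app-cpwl-problem}, with $\xi=x$ and $d=n$.

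For the feasible sets, I would note that in both problems the constraints $\widehat L\le\widehat A x\le\widehat U$ and $\ell\le x\le u$ are shared. It therefore suffices to show
\[
x_j\ge x_k \ \text{and}\ x_j\ge -x_k
\iff |x_k|-x_j\le 0 .
\]
This follows from $|x_k|=\max\{x_k,-x_k\}$: the right-hand side says $\max\{x_k,-x_k\}\le x_j$, which is exactly the conjunction on the left.

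For the convex P-LP form, I would express every constraint either as $f_i(x)\le 0$ with $f_i$ a finite maximum of affine functions, or as a row of $Ex=h$. The pieces are as follows.

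\begin{itemize}
\item The objective $f_0(x)=c^\top x$ is a maximum over $N_0=1$ affine function.
\item The new constraint uses $f(x)=\max\{e_k^\top x-e_j^\top x,\,-e_k^\top x-e_j^\top x\}$, with $N=2$.
\item For each remaining row $i$ with $L_i=U_i$ finite, the constraint goes into $Ex=h$ as $\widehat A_{i,:}x=U_i$.
\item For every other finite bound: a finite $U_i$ gives $f(x)=\widehat A_{i,:}x-U_i$, and a finite $L_i>-\infty$ gives $f(x)=L_i-\widehat A_{i,:}x$.
\item Finite variable bounds give $f(x)=x_t-u_t$ and $f(x)=\ell_t-x_t$ (or an equality when $\ell_t=u_t$).
\item Infinite bounds impose no constraint and are simply dropped.
\end{itemize}

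If no equalities arise, I would take $r=0$ in \eqref{eq:app-cpwl-problem}. Each resulting constraint is a one-piece affine function, so the problem has the form \eqref{eq:app-cpwl-problem}.

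No step is a real obstacle. The only care needed is the bookkeeping of infinite and equal bounds in the second part, which the case split above handles.
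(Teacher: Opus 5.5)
Your proposal is correct and follows the paper's proof. The key step is the equivalence of $x_j\ge x_k$ and $x_j\ge -x_k$ with $|x_k|-x_j\le 0$ via $|x_k|=\max\{x_k,-x_k\}$, with all other constraints and the objective unchanged. Your explicit casting of each remaining row and bound constraint into the form~\eqref{eq:app-cpwl-problem}, using one-piece affine functions and equalities in $E\xi=h$, simply writes out bookkeeping that the paper leaves implicit.
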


\begin{proof}
The inequalities $x_j\geq x_k$ and $x_j\geq-x_k$ are equivalent to
$|x_k|-x_j\leq0$. All other constraints and the objective function
$c^\top x$ are unchanged.
\end{proof}

\begin{proposition}[Pattern 2]
\label{prop:app-pattern2-recognition}
Suppose $p\ne q$ and the variables $x_p,x_q$ in problem~\eqref{eq:original-lp}
satisfy the conditions of Pattern~2. Define
\[
\alpha=\frac{c_p-c_q}{2},\quad
\beta=\frac{c_p+c_q}{2},\quad
d_i=\frac{A_{ip}-A_{iq}}{2},\quad
s_i=\frac{A_{ip}+A_{iq}}{2},\qquad i\in[m].
\]
The associated problem in the variables
$x_{[n]\setminus\{p,q\}}\in\mathbb R^{n-2}$ and $y\in\mathbb R$ is
\begin{equation}
\begin{array}{@{}>{\displaystyle}c@{\quad}>{\displaystyle}l@{\qquad}l@{}}
\min_{x_{[n]\setminus\{p,q\}},\,y}
& \sum_{j\in[n]\setminus\{p,q\}}c_jx_j+\alpha y+\beta|y|
& \\[3pt]
\mathrm{s.t.}
& L_i\leq\sum_{j\in[n]\setminus\{p,q\}}A_{ij}x_j+d_i y+s_i|y|\leq U_i,
& i\in[m],\\[3pt]
& \max\{y,0\}\leq u_p,\qquad \max\{-y,0\}\leq u_q,
& \\[3pt]
& \ell_j\leq x_j\leq u_j,
& j\in[n]\setminus\{p,q\}.
\end{array}
\label{eq:presolve-pattern2-underlying}
\end{equation}

\noindent Problem~\eqref{eq:presolve-pattern2-underlying} is a convex P-LP
with the following properties:
\begin{enumerate}[label=(\roman*),widest=ii,align=left,labelindent=0pt,leftmargin=*,nosep]
\setlength{\emergencystretch}{1em}
\item For any feasible solution $x$ of problem~\eqref{eq:original-lp},
$(x_{[n]\setminus\{p,q\}},x_p-x_q)$ is a feasible solution of
problem~\eqref{eq:presolve-pattern2-underlying} with an objective value
no greater than the objective value of $x$ in problem~\eqref{eq:original-lp}.
\item For any feasible solution $(x_{[n]\setminus\{p,q\}},y)$ of
problem~\eqref{eq:presolve-pattern2-underlying}, set $x_p=\max\{y,0\}$
and $x_q=\max\{-y,0\}$, keeping $x_j$ unchanged for all
$j\in[n]\setminus\{p,q\}$. Then $x$ is a feasible solution of
problem~\eqref{eq:original-lp} with the same objective value as
$(x_{[n]\setminus\{p,q\}},y)$ in
problem~\eqref{eq:presolve-pattern2-underlying}.
\end{enumerate}
\end{proposition}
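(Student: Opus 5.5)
We prove Proposition~\ref{prop:app-pattern2-recognition} directly from the identities $x_p=\tfrac12(|y|+y)$ and $x_q=\tfrac12(|y|-y)$, which hold when $x_p=\max\{y,0\}$ and $x_q=\max\{-y,0\}$. The argument has three steps: convexity of~\eqref{eq:presolve-pattern2-underlying}, then property~(ii), then property~(i).

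\textbf{Convex P-LP form.} We check that~\eqref{eq:presolve-pattern2-underlying} has the form~\eqref{eq:app-cpwl-problem}. Since $\beta=(c_p+c_q)/2\ge 0$, the objective is a linear term plus $\beta|y|=\max\{\beta y,-\beta y\}$. It is therefore a maximum of the two affine functions obtained from the choices $|y|\mapsto\pm y$. For a row $i$ with $U_i<+\infty$, Pattern~2 gives $s_i\ge0$. The upper constraint can then be written as
\[
\max\{\textstyle\sum_j A_{ij}x_j+(d_i\pm s_i)y\}-U_i\le 0,
\]
which is a convex piecewise-linear inequality. For a row with $L_i>-\infty$, Pattern~2 gives $s_i\le0$, so $-(\cdots)+L_i\le0$ is convex in the same way. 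Equality rows have both bounds finite, so $s_i=0$ and the row is affine. Such rows go into $E\xi=h$, or equivalently become two affine inequalities. The bound constraints $\max\{y,0\}\le u_p$ and $\max\{-y,0\}\le u_q$ are maxima of affine functions. The bounds on the remaining $x_j$ are affine.

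\textbf{Property (ii).} Set $x_p=\max\{y,0\}$ and $x_q=\max\{-y,0\}$. Then $x_p,x_q\ge0=\ell_p=\ell_q$, and $x_p\le u_p$, $x_q\le u_q$ by the constraints of~\eqref{eq:presolve-pattern2-underlying}. Using $x_p=\tfrac12(|y|+y)$ and $x_q=\tfrac12(|y|-y)$, we get $A_{ip}x_p+A_{iq}x_q=d_iy+s_i|y|$ and $c_px_p+c_qx_q=\alpha y+\beta|y|$. Hence the row values and the objective coincide, and the row constraints of~\eqref{eq:original-lp} hold.

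\textbf{Property (i).} This is the main step. Let $x$ be feasible for~\eqref{eq:original-lp} and set $y=x_p-x_q$. Since $x_p,x_q\ge0$, we have $\max\{y,0\}\le x_p\le u_p$ and $\max\{-y,0\}\le x_q\le u_q$. Write $t=\min\{x_p,x_q\}\ge0$. Then $x_p+x_q=|y|+2t$, and so
\[
A_{ip}x_p+A_{iq}x_q=d_iy+s_i(|y|+2t)=d_iy+s_i|y|+2s_it .
\]
Suppose $U_i<+\infty$. Then $s_i\ge0$, so the new row value is at most the old one and the bound $\le U_i$ is preserved. Suppose $L_i>-\infty$. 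Then $s_i\le0$, so the new row value is at least the old one and the bound $\ge L_i$ is preserved. When both bounds are finite, $s_i=0$ and the row value is unchanged. In the same way, the objective changes by $-2\beta t\le0$. The main obstacle is exactly this sign bookkeeping, which ensures that removing the common part $t$ never violates a finite row bound. It is where the Pattern~2 sign conditions are used.
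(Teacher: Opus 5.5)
Your proof is correct and follows essentially the same argument as the paper: the Pattern~2 sign conditions ($\beta\ge 0$, $s_i\ge 0$ when $U_i<+\infty$, $s_i\le 0$ when $L_i>-\infty$) give the convex P-LP structure, property~(ii) follows from $x_p-x_q=y$ and $x_p+x_q=|y|$, and property~(i) uses $x_p+x_q=|y|+2\min\{x_p,x_q\}$, so that row values and the objective shift by $-2s_i t$ and $-2\beta t$. The differences are cosmetic: you prove (ii) before (i), and you spell out the max-of-affine representation of each constraint (including the observation that rows with both bounds finite have $s_i=0$) more explicitly than the paper, which only cites the sign conditions.
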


\begin{proof}
Since $\beta\geq0$, $s_i\geq0$ for all $i\in[m]$ with $U_i<+\infty$,
and $-s_i\geq0$ for all $i\in[m]$ with $L_i>-\infty$,
problem~\eqref{eq:presolve-pattern2-underlying} is a convex P-LP.

\medskip\noindent\emph{Proof of (i).}
For any feasible solution $x$ of problem~\eqref{eq:original-lp},
set $y=x_p-x_q$ and retain the other coordinates.
Let $\delta=\min\{x_p,x_q\}$. Since $x_p,x_q\geq0$, we have
$\delta\geq0$ and $|y|=|x_p-x_q|=x_p+x_q-2\delta$. Hence
\[
\begin{aligned}
d_i y+s_i|y|&=(d_i+s_i)x_p+(s_i-d_i)x_q-2s_i\delta
=A_{ip}x_p+A_{iq}x_q-2s_i\delta,\qquad i\in[m],\\[2pt]
\alpha y+\beta|y|&=(\alpha+\beta)x_p+(\beta-\alpha)x_q-2\beta\delta
=c_px_p+c_qx_q-2\beta\delta.
\end{aligned}
\]
Since $\beta\geq0$, $s_i\geq0$ for all $i\in[m]$ with $U_i<+\infty$,
$s_i\leq0$ for all $i\in[m]$ with $L_i>-\infty$,
$\max\{y,0\}\leq x_p$, and $\max\{-y,0\}\leq x_q$,
$(x_{[n]\setminus\{p,q\}},y)$ is a feasible solution of
problem~\eqref{eq:presolve-pattern2-underlying}
and has an objective value no greater than the objective value of $x$
in problem~\eqref{eq:original-lp}.

\medskip\noindent\emph{Proof of (ii).}
Let $(x_{[n]\setminus\{p,q\}},y)$ be a feasible solution of
problem~\eqref{eq:presolve-pattern2-underlying}.
Set $x_p=\max\{y,0\}$ and $x_q=\max\{-y,0\}$, keeping $x_j$
unchanged for all $j\in[n]\setminus\{p,q\}$.
Then
\[
\begin{aligned}
x_p-x_q&=\max\{y,0\}-\max\{-y,0\}=y,\\
x_p+x_q&=\max\{y,0\}+\max\{-y,0\}=|y|.
\end{aligned}
\]
We obtain
\[
\begin{aligned}
(Ax)_i&=\sum_{j\in[n]\setminus\{p,q\}}A_{ij}x_j
+A_{ip}x_p+A_{iq}x_q
=\sum_{j\in[n]\setminus\{p,q\}}A_{ij}x_j
+d_i(x_p-x_q)+s_i(x_p+x_q)\\
&=\sum_{j\in[n]\setminus\{p,q\}}A_{ij}x_j
+d_i y+s_i|y|,\qquad i\in[m],\\[3pt]
c^\top x&=\sum_{j\in[n]\setminus\{p,q\}}c_jx_j
+c_px_p+c_qx_q
=\sum_{j\in[n]\setminus\{p,q\}}c_jx_j
+\alpha(x_p-x_q)+\beta(x_p+x_q)\\
&=\sum_{j\in[n]\setminus\{p,q\}}c_jx_j
+\alpha y+\beta|y|.
\end{aligned}
\]
The constraints of problem~\eqref{eq:presolve-pattern2-underlying}
give $L\leq Ax\leq U$, $x_p\leq u_p$, and $x_q\leq u_q$.
Since $x_p,x_q\geq0$, $\ell_p=\ell_q=0$, and the other coordinates
retain their bounds, we also have $\ell\leq x\leq u$.
Thus $x$ is a feasible solution of problem~\eqref{eq:original-lp}
with the same objective value as $(x_{[n]\setminus\{p,q\}},y)$ in
problem~\eqref{eq:presolve-pattern2-underlying}.
\end{proof}

\subsection{From Convex P-LPs to LPs}

The following proposition gives two LP representations of
problem~\eqref{eq:app-cpwl-problem}, one exhibiting each pattern.

\begin{proposition}
\label{prop:app-universal-patterns}
For the functions in~\eqref{eq:app-pwl-function}, write
\[
g_{ik}(\xi)=\alpha_{ik}^{\top}\xi+\beta_{ik},
\qquad i=0,\ldots,s,\quad k\in[N_i].
\]
We may assume $N_i\geq2$ for every $i=0,\ldots,s$ by duplicating
an affine piece when necessary.
Let $\xi\in\mathbb R^d$ and $t\in\mathbb R$.
The vectors $y,z,p,q$ consist of real auxiliary variables
$y_{ik},z_{ik},p_{ik},q_{ik}$, respectively, indexed by
$i=0,\ldots,s$ and $k=2,\ldots,N_i$.
The following LPs represent problem~\eqref{eq:app-cpwl-problem}
and exhibit Patterns~1 and~2, respectively:
\begin{equation}
\begin{array}{@{}>{\displaystyle}c@{\quad}>{\displaystyle}l@{\qquad}l@{}}
\min_{\xi,t,y,z} & t & \\[3pt]
\mathrm{s.t.}
& y_{ik}=g_{ik}(\xi)-g_{i1}(\xi),
& i=0,\ldots,s,\ k=2,\ldots,N_i,\\[3pt]
& z_{ik}\geq y_{ik},\qquad z_{ik}\geq-y_{ik},
& i=0,\ldots,s,\ k=2,\ldots,N_i,\\[3pt]
& g_{0k}(\xi)+g_{01}(\xi)+z_{0k}\leq2t,
& k=2,\ldots,N_0,\\[3pt]
& g_{ik}(\xi)+g_{i1}(\xi)+z_{ik}\leq0,
& i\in[s],\ k=2,\ldots,N_i,\\[3pt]
& E\xi=h.
\end{array}
\label{eq:app-universal-pattern1}
\end{equation}
\begin{equation}
\begin{array}{@{}>{\displaystyle}c@{\quad}>{\displaystyle}l@{\qquad}l@{}}
\min_{\xi,t,p,q} & t & \\[3pt]
\mathrm{s.t.}
& p_{ik}-q_{ik}=g_{ik}(\xi)-g_{i1}(\xi),
& i=0,\ldots,s,\ k=2,\ldots,N_i,\\[3pt]
& p_{ik},q_{ik}\geq0,
& i=0,\ldots,s,\ k=2,\ldots,N_i,\\[3pt]
& g_{0k}(\xi)+g_{01}(\xi)+p_{0k}+q_{0k}\leq2t,
& k=2,\ldots,N_0,\\[3pt]
& g_{ik}(\xi)+g_{i1}(\xi)+p_{ik}+q_{ik}\leq0,
& i\in[s],\ k=2,\ldots,N_i,\\[3pt]
& E\xi=h.
\end{array}
\label{eq:app-universal-pattern2}
\end{equation}
Both LPs have the same optimal value as
problem~\eqref{eq:app-cpwl-problem}.
If $(\xi^\star,t^\star,y^\star,z^\star)$ solves
problem~\eqref{eq:app-universal-pattern1}, or
$(\xi^\star,t^\star,p^\star,q^\star)$ solves
problem~\eqref{eq:app-universal-pattern2}, then $\xi^\star$ solves
problem~\eqref{eq:app-cpwl-problem}.
For every optimal solution $\xi^\star$ of
problem~\eqref{eq:app-cpwl-problem}, each LP has an optimal solution
with $\xi=\xi^\star$.
\end{proposition}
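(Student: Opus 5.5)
The plan is to reduce everything to the elementary identity
\[
\max\{a,b\}=\tfrac12\bigl(a+b+|a-b|\bigr),\qquad a,b\in\mathbb R,
\]
applied with $a=g_{ik}(\xi)$ and $b=g_{i1}(\xi)$. Since $N_i\geq2$ and $g_{i1}$ appears in every pair, we have
\[
f_i(\xi)=\max_{2\leq k\leq N_i}\max\{g_{ik}(\xi),g_{i1}(\xi)\}.
\]
Hence $f_i(\xi)\leq0$ holds if and only if $g_{ik}(\xi)+g_{i1}(\xi)+|g_{ik}(\xi)-g_{i1}(\xi)|\leq0$ for all $k\geq2$. Similarly, $f_0(\xi)\leq t$ holds if and only if $g_{0k}(\xi)+g_{01}(\xi)+|g_{0k}(\xi)-g_{01}(\xi)|\leq2t$ for all $k\geq2$. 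The two LPs replace the absolute value $|y_{ik}|$, with $y_{ik}=g_{ik}-g_{i1}$, by an upper-bounding variable: $z_{ik}$ in~\eqref{eq:app-universal-pattern1} and $p_{ik}+q_{ik}$ in~\eqref{eq:app-universal-pattern2}.

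\emph{Step 1 (projection is feasible and dominated).} Take any feasible point of~\eqref{eq:app-universal-pattern1}. The constraints $z_{ik}\geq\pm y_{ik}$ give $z_{ik}\geq|y_{ik}|$. Substituting this into the remaining inequalities yields $f_i(\xi)\leq0$ for $i\in[s]$ and $f_0(\xi)\leq t$, while $E\xi=h$ is kept. Thus $\xi$ is feasible for~\eqref{eq:app-cpwl-problem} with $f_0(\xi)\leq t$. For~\eqref{eq:app-universal-pattern2}, the same argument uses $p_{ik}+q_{ik}\geq|p_{ik}-q_{ik}|=|y_{ik}|$, which holds because $p_{ik},q_{ik}\geq0$.

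\emph{Step 2 (lifting is exact).} Conversely, let $\xi$ be feasible for~\eqref{eq:app-cpwl-problem}. Set $t=f_0(\xi)$ and $y_{ik}=g_{ik}(\xi)-g_{i1}(\xi)$. For the first LP take $z_{ik}=|y_{ik}|$; for the second take $p_{ik}=\max\{y_{ik},0\}$ and $q_{ik}=\max\{-y_{ik},0\}$. In both cases the identity turns each inequality into exactly the condition $f_i(\xi)\leq0$ or $f_0(\xi)\leq t$. So each LP has a feasible point with objective value $f_0(\xi)$.

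\emph{Step 3 (optimal values and solutions).} Steps 1 and 2 together show the following. The infimum of each LP equals the infimum of~\eqref{eq:app-cpwl-problem}. An LP-optimal point has $f_0(\xi^\star)\leq t^\star=v$, so $\xi^\star$ is optimal for~\eqref{eq:app-cpwl-problem}. Finally, the lift of Step 2 applied to any optimal $\xi^\star$ is LP-optimal and has $\xi=\xi^\star$. Infeasibility and unboundedness transfer in the same way, so no attainment issue arises beyond what is stated.

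\emph{Step 4 (patterns).} It remains to check the patterns, and I expect this to be the only delicate point, because the row and column conventions must match Pattern~2 literally.
\begin{itemize}
\item Pattern~1 is immediate: the LP~\eqref{eq:app-universal-pattern1} contains the rows $z_{0k}\geq y_{0k}$ and $z_{0k}\geq-y_{0k}$, and $z_{0k}$ and $y_{0k}$ are distinct variables.
\item For Pattern~2, take the pair $p_{0k},q_{0k}$. Both have lower bound $0$ and zero cost, so $c_p+c_q=0\geq0$.
\item In the equality row, with $L_i=U_i$ finite, the coefficients are $+1$ and $-1$. Their sum is $0$, which satisfies both sign conditions.
\item In the row $g_{0k}+g_{01}+p_{0k}+q_{0k}-2t\leq0$, written as $L_i=-\infty$ and $U_i=0$, the coefficients sum to $2\geq0$, as required when $U_i<+\infty$.
\item The pair appears in no other rows.
\end{itemize}
Hence both patterns are exhibited, which completes the plan.
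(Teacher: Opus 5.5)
Your proposal is correct and follows the same route as the paper's proof. Both arguments use the identity $2\max\{a,b\}=a+b+|a-b|$ to show that every LP-feasible point projects to a feasible $\xi$ with $f_0(\xi)\le t$, lift via $z_{ik}=|y_{ik}|$ or the positive and negative parts $p_{ik},q_{ik}$, and check both patterns (you do this last step in more detail than the paper), with one cosmetic slip: in the row $g_{0k}(\xi)+g_{01}(\xi)+p_{0k}+q_{0k}\le 2t$ the constants $\beta_{0k}+\beta_{01}$ move to the right-hand side, so $U_i=-(\beta_{0k}+\beta_{01})$ rather than $0$, but only its finiteness matters for Pattern~2.
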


\begin{proof}
The inequalities $z_{ik}\geq y_{ik}$ and $z_{ik}\geq-y_{ik}$ in
problem~\eqref{eq:app-universal-pattern1} constitute Pattern~1.
In problem~\eqref{eq:app-universal-pattern2}, each pair
$(p_{ik},q_{ik})$ with $i=0,\ldots,s$ and $k=2,\ldots,N_i$
satisfies the conditions of Pattern~2.

We first show that every feasible solution of
problem~\eqref{eq:app-universal-pattern1} or~\eqref{eq:app-universal-pattern2} gives a
feasible solution $\xi$ of problem~\eqref{eq:app-cpwl-problem}, with
$f_0(\xi)\leq t$.
Let $(\xi,t,y,z)$ be a feasible solution of
problem~\eqref{eq:app-universal-pattern1}.
Its constraints give
$z_{ik}\geq|y_{ik}|=|g_{ik}(\xi)-g_{i1}(\xi)|$ for
$i=0,\ldots,s$ and $k=2,\ldots,N_i$.
For $i\in[s]$ and $k=2,\ldots,N_i$, we therefore have
\[
\begin{aligned}
2\max\{g_{i1}(\xi),g_{ik}(\xi)\}
&=g_{i1}(\xi)+g_{ik}(\xi)+|g_{ik}(\xi)-g_{i1}(\xi)|\\
&\leq g_{i1}(\xi)+g_{ik}(\xi)+z_{ik}\leq0.
\end{aligned}
\]
Thus $g_{i1}(\xi)\leq0$ and $g_{ik}(\xi)\leq0$ for every
$k=2,\ldots,N_i$, giving
$f_i(\xi)=\max_{k\in[N_i]}g_{ik}(\xi)\leq0$ for all $i\in[s]$.
Similarly, for $k=2,\ldots,N_0$,
\[
2\max\{g_{01}(\xi),g_{0k}(\xi)\}
\leq g_{01}(\xi)+g_{0k}(\xi)+z_{0k}\leq2t,
\]
so $f_0(\xi)=\max_{k\in[N_0]}g_{0k}(\xi)\leq t$.
Since $E\xi=h$ also holds, $\xi$ is feasible for
problem~\eqref{eq:app-cpwl-problem}, with an objective value
$f_0(\xi)$ no greater than the objective value $t$ of
$(\xi,t,y,z)$ in problem~\eqref{eq:app-universal-pattern1}.

For a feasible solution $(\xi,t,p,q)$ of
problem~\eqref{eq:app-universal-pattern2}, the constraints give
\[
p_{ik}+q_{ik}\geq|p_{ik}-q_{ik}|
=|g_{ik}(\xi)-g_{i1}(\xi)|,
\qquad i=0,\ldots,s,\quad k=2,\ldots,N_i.
\]
Replacing $z_{ik}$ by $p_{ik}+q_{ik}$ in the preceding inequalities
gives $f_i(\xi)\leq0$ for all $i\in[s]$ and $f_0(\xi)\leq t$.
Together with the constraint $E\xi=h$, this shows that $\xi$ is
feasible for problem~\eqref{eq:app-cpwl-problem}, with an objective
value $f_0(\xi)$ no greater than the objective value $t$ of
$(\xi,t,p,q)$ in problem~\eqref{eq:app-universal-pattern2}.

We next show that every feasible solution $\xi$ of
problem~\eqref{eq:app-cpwl-problem} gives feasible solutions of
problems~\eqref{eq:app-universal-pattern1} and~\eqref{eq:app-universal-pattern2},
each with objective value $f_0(\xi)$.
Set $t=f_0(\xi)$.
For $i=0,\ldots,s$ and $k=2,\ldots,N_i$, choose
\[
y_{ik}=g_{ik}(\xi)-g_{i1}(\xi),\qquad
z_{ik}=|g_{ik}(\xi)-g_{i1}(\xi)|
\]
in problem~\eqref{eq:app-universal-pattern1}, and
\[
p_{ik}=\max\{g_{ik}(\xi)-g_{i1}(\xi),0\},\qquad
q_{ik}=\max\{g_{i1}(\xi)-g_{ik}(\xi),0\}
\]
in problem~\eqref{eq:app-universal-pattern2}.
Both resulting LP solutions are feasible and have objective value
$t=f_0(\xi)$, equal to the objective value of $\xi$ in
problem~\eqref{eq:app-cpwl-problem}.

It follows that both LPs have the same optimal value as
problem~\eqref{eq:app-cpwl-problem}.
If $\xi^\star$ is optimal for problem~\eqref{eq:app-cpwl-problem},
the choices above give optimal solutions of both LPs.
If $(\xi^\star,t^\star,y^\star,z^\star)$ is an optimal solution of
problem~\eqref{eq:app-universal-pattern1}, or $(\xi^\star,t^\star,p^\star,q^\star)$ is an
optimal solution of problem~\eqref{eq:app-universal-pattern2}, then
$\xi^\star$ is feasible for problem~\eqref{eq:app-cpwl-problem} and
$f_0(\xi^\star)\leq t^\star$.
Since $t^\star$ equals the optimal value of
problem~\eqref{eq:app-cpwl-problem}, $\xi^\star$ is an optimal solution of
problem~\eqref{eq:app-cpwl-problem}.
\end{proof}

\subsection{Validity of the Piecewise-Linear Reformulation Rule}
\label{app:patterns-rule-validity}

\begin{proof}[Proof of Theorem~\ref{thm:unified-presolve-validity}]
Suppose the original LP has an optimal solution.
By Propositions~\ref{prop:app-pattern1-recognition}
and~\ref{prop:app-pattern2-recognition}, the recognized
convex P-LP has an optimal solution and the same optimal value
as the original LP.
The valid presolve step then produces a presolved convex P-LP
with an optimal solution and the same optimal value.
By Proposition~\ref{prop:app-universal-patterns}, the resulting
presolved LP also has an optimal solution and the same optimal
value as the original LP.

Every optimal solution of the presolved LP gives an optimal
solution of the presolved convex P-LP by
Proposition~\ref{prop:app-universal-patterns}.
The presolve recovery map then recovers an optimal solution of the
recognized convex P-LP, from which the constructions in
Propositions~\ref{prop:app-pattern1-recognition}
and~\ref{prop:app-pattern2-recognition} recover an optimal solution
of the original LP.
Composing these maps gives the recovery map required by
Definition~\ref{def:valid-preprocessing-operator}.
\end{proof}

\end{document}